\theoremstyle{plain}
\newtheorem{theorem}                {Theorem}      [section]
\newtheorem*{theorem*}                {Theorem \ref{thm:appl}}
\newtheorem{proposition}  [theorem]  {Proposition}
\newtheorem{lemma}        [theorem]  {Lemma}
\theoremstyle{definition}
\newtheorem{remark}       [theorem]  {Remark}
\def \n{\mbox{${\mathbb N}$}}
\renewcommand{\i}{\mathrm{i}}
\DeclareMathOperator{\trace}{trace} 
\DeclareMathOperator{\rank}{rank}
\numberwithin{equation}{section}
\begin{document}

\title[Biharmonic tori in spheres]
{Biharmonic tori in spheres}

\author{Dorel~Fetcu}
\author{Eric~Loubeau}
\author{Cezar~Oniciuc}

\address{Department of Mathematics and Informatics\\
Gh. Asachi Technical University of Iasi\\
Bd. Carol I, 11 \\
700506 Iasi, Romania} \email{dfetcu@math.tuiasi.ro}

\address{D{\'e}partement de Math{\'e}matiques \\
LMBA, UMR 6205 \\
Universit{\'e} de Bretagne Occidentale \\
6, avenue Victor Le Gorgeu \\
CS 93837, 29238 Brest Cedex 3, France}
\email{Eric.Loubeau@univ-brest.fr}

\address{Faculty of Mathematics\\ Al. I. Cuza University of Iasi\\
Bd. Carol I, 11 \\ 700506 Iasi, Romania} \email{oniciucc@uaic.ro}

\thanks{The first and third authors' work was supported by a grant of the Romanian National Authority for Scientific Research and Innovation, CNCS - UEFISCDI, project number PN-II-RU-TE-2014-4-0004.}

\subjclass[2010]{53A10, 53C42, 11D09}

\keywords{Biharmonic immersion}

\begin{abstract} We consider proper-biharmonic flat tori with constant mean curvature (CMC) in spheres and find necessary and sufficient conditions for certain rectangular tori and square tori to admit full CMC proper-biharmonic immersions in $\mathbb{S}^n$, as well as the explicit expressions of some of these immersions.
\end{abstract}

\maketitle

\section{Introduction}

Suggested in $1964$ in the seminal paper \cite{ES} the notion of a biharmonic map between two Riemannian manifolds, defined as a critical point of the bienergy functional
$$
E_2 (\phi) = \frac{1}{2} \int_M |\tau(\phi)|^2 \, v_g,
$$
for $\phi : (M,g) \to (N,\tilde{g})$, represents a generalization of harmonic maps. Since any harmonic map is biharmonic, we study proper-biharmonic maps, i.e., biharmonic maps which are not harmonic. The Euler-Lagrange equation corresponding to the bienergy, obtained in \cite{J}, is given by the vanishing of the bitension field, i.e.,
$$
\tau_2 (\phi) = - \Delta \tau(\phi) - \trace R^N (d\phi,\tau(\phi))d\phi =0.
$$
We note that this definition of biharmonicity coincides with the one in \cite{C} only for the case of immersions in Euclidean space. 

Specializing to Riemannian immersions, or submanifolds, the relationship between minimality and harmonicity is a well-known fact. A special connection, although a more complex one, does also exist between biharmonic submanifolds and submanifolds with constant mean curvature, called CMC submanifolds. This relation was investigated in many papers, one of the most recent being \cite{LO}. In this paper are described CMC proper-biharmonic immersions of surfaces in spheres, a result which led to the fact that for any $h\in(0,1)$ there exist CMC proper-biharmonic planes and cylinders in $\mathbb{S}^5$ with mean curvature $h$ and also to the finding of a necessary and sufficient condition, in terms of $h$, for the existence of CMC proper-biharmonic tori in $\mathbb{S}^5$. We note that a result in \cite{O} shows that the value of the mean curvature of a CMC proper-biharmonic submanifold in a unit Euclidean sphere $\mathbb{S}^n$ must be less or equal to one. Moreover, the mean curvature is equal to one if and only if the submanifold lies in a small hypersphere $\mathbb{S}^{n-1}(1/\sqrt{2})$ as a minimal submanifold. The case of proper-biharmonic surfaces with parallel (in the normal bundle) mean curvature vector field proved to be a rather rigid one as it turned out that the mean curvature of such surfaces must be equal to one (see \cite{BMO}). 

In the first part of our paper we revisit the classification of CMC proper-bi\-har\-mo\-nic flat tori in $\mathbb{S}^5$ and give a new proof of the result in \cite{LO}. Then, looking from a different angle, we ask whether a given torus admits a full CMC proper-biharmonic immersion in a sphere $\mathbb{S}^n$. We study this problem for two families of tori: rectangular tori with a side of length equal to one and square tori. In the first case we determine the expressions of all such rectangular tori that admit full CMC proper-biharmonic immersions in a sphere $\mathbb{S}^n$, as well as these immersions. This result is a rather rigid one as the only admissible dimensions for the ambient space are $5$ and $7$. The case of square tori is more flexible and we prove that there are examples of full CMC proper-biharmonic immersions of such tori in $\mathbb{S}^n$ for any odd positive integer $n\geq 5$.    

We will always consider full immersions in $\mathbb{S}^n$, i.e., their images do not lie in any totally geodesic sphere $\mathbb{S}^{n'}\subset \mathbb{S}^n$.

{\bf Acknowledgments.} The authors would like to thank Iulian Stoleriu for verifying some of the computations in Proposition \ref{prop12}. 

\section{General results}

We will first recall a general characterization theorem on CMC proper-bi\-har\-mo\-nic flat surfaces in spheres obtained in \cite{LO} as a direct application of a result in \cite{M} and then specialize our study to the case of tori and give a new proof to an existence criterion in \cite{LO} for CMC proper-biharmonic tori in $\mathbb{S}^5$, formulated in terms of their mean curvature.

\begin{theorem}[\cite{LO}]\label{thmM}
Let $D$ be a small disk about the origin in the Euclidean plane $\mathbb{R}^2$ and $\phi : D \to \mathbb{S}^{n}$ be a CMC proper-biharmonic immersion with mean curvature $h=|H|\in (0,1)$. Then
\begin{enumerate}
\item[(i)] $n$ is odd, $n\geq 5$.
\item[(ii)] $\phi$ extends uniquely to a CMC proper-biharmonic immersion of $\mathbb{R}^2$ into $\mathbb{S}^{n}$.
\item[(iii)] $\psi = i \circ \phi : \mathbb{R}^2 \to \mathbb{R}^{n+1}$ can be written
\begin{align}\label{psi}
\psi (z) =& \tfrac{1}{\sqrt{2}} \sum_{k=1}^{m} \sqrt{R_k}\left(  e^{\tfrac{\sqrt{\lambda_1}}{2}(\mu_k z - \bar{\mu}_k\bar{z})}Z_k
+ e^{\tfrac{\sqrt{\lambda_1}}{2}(-\mu_k z + \bar{\mu}_k\bar{z})}\bar{Z}_k\right)\\\nonumber
&+ \tfrac{1}{\sqrt{2}} \sum_{j=1}^{m'} \sqrt{R'_j}\left( e^{\tfrac{\sqrt{\lambda_2}}{2}(\eta_j z - \bar{\eta}_j \bar{z})}W_j
+ e^{\tfrac{\sqrt{\lambda_2}}{2}(-\eta_j z + \bar{\eta}_j \bar{z})}\overline{W}_j\right) ,
\end{align}
where 
\begin{enumerate}
\item $Z_k = \left( E_{2k-1} - \i E_{2k}\right)/2, \, k\in\{1,\dots,m\}, \i^2=-1$,
\item $W_j = \left( E_{2(m+j)-1} - \i E_{2(m+j)}\right)/2 ,\, j\in\{1,\dots,m'\},$
\item $\{E_1,\dots,E_{2m+2m'}\}$ is an orthonormal basis of $\mathbb{R}^{n+1}$, $n=2m+2m'-1$,
\item $\lambda_1 = 2(1-h)$, $\lambda_2 = 2(1+h)$,  
\item $\sum_k R_k =1$, $\sum_j R'_j =1$, $R_k >0$, $R'_j >0$,
\item $(1-h)\sum_k \mu^2_k R_k + (1+h)\sum_j \eta^2_jR'_j = 0$.
\item $\{ \pm \mu_k\}_{k=1}^{m}$ are $2m$ distinct complex numbers of norm $1$, 
\item $\{ \pm \eta_j\}_{j=1}^{m'}$ are $2m'$ distinct complex numbers of norm $1$.
\end{enumerate}
\end{enumerate}
\end{theorem}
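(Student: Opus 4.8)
The plan is to push the problem into the Euclidean ambient space through $\psi=i\circ\phi:D\to\mathbb{R}^{n+1}$ and to recognise the CMC--biharmonicity conditions as a constant-coefficient spectral splitting of $\psi$. I would first record the two ingredients I intend to combine. Since $D$ is a surface, $\tau(\phi)=2H$, and using $R^{\mathbb{S}^n}(X,Y)Z=\langle Y,Z\rangle X-\langle X,Z\rangle Y$ one finds $\trace R^{\mathbb{S}^n}(d\phi,H)d\phi=-2H$, so the bitension field collapses to $\tau_2(\phi)=-2\Delta^\phi H+4H$; hence $\phi$ is biharmonic if and only if $\Delta^\phi H=2H$, where $\Delta^\phi$ is the rough Laplacian of $\phi^{-1}T\mathbb{S}^n$. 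Separately, writing $\Delta^e$ for the (positive) Laplacian of the flat surface and viewing $H$ inside $\mathbb{R}^{n+1}$, the Beltrami formula gives $\Delta^e\psi=2(\psi-H)$, that is $H=\psi-\tfrac12\Delta^e\psi$.

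The core step is to merge these, together with $|H|=h$, into a single fourth-order equation for $\psi$. The two connections along $\phi$ differ by the second fundamental form $-\langle\cdot,\cdot\rangle\psi$ of $\mathbb{S}^n\subset\mathbb{R}^{n+1}$, so rewriting $\Delta^\phi H=2H$ in terms of $\Delta^e$ and substituting $H=\psi-\tfrac12\Delta^e\psi$ should yield
\[
(\Delta^e)^2\psi-4\Delta^e\psi+4(1-h^2)\psi=0,
\]
i.e. $(\Delta^e-\lambda_1)(\Delta^e-\lambda_2)\psi=0$ with $\lambda_1=2(1-h)$ and $\lambda_2=2(1+h)$. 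This identity---with the eigenvalues pinned to precisely these values---is the crux, and the step where the structural input of \cite{M} is used; carefully controlling the discrepancy between $\Delta^\phi$ and $\Delta^e$ is the main obstacle, while what follows is bookkeeping. As $h\in(0,1)$ gives $\lambda_1\neq\lambda_2$, the operator factors over distinct eigenspaces and $\psi$ splits orthogonally as $\psi=\psi_1+\psi_2$ with $\Delta^e\psi_i=\lambda_i\psi_i$; these are the two sums in \eqref{psi}, and they give $H=\psi-\tfrac12\Delta^e\psi=h(\psi_1-\psi_2)$.

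I would then solve the flat eigenvalue problem. On $\mathbb{R}^2\cong\mathbb{C}$ with a conformal coordinate $z$, the equation $\Delta^e f=\lambda f$ is solved by superpositions of $e^{\tfrac{\sqrt{\lambda}}{2}(\mu z-\bar\mu\bar z)}$; the frequency magnitude is forced to be $\sqrt{\lambda}$, and pairing each complex frequency $\mu$ with $-\mu$ (equivalently each vector with its conjugate) so that $\psi$ stays real produces the combinations $Z_k,\bar Z_k$ and $W_j,\overline{W}_j$ once an orthonormal basis $\{E_i\}$ adapted to the occupied directions is chosen. Because these waves are entire and real-analytic, the representation found on $D$ extends uniquely to all of $\mathbb{R}^2$, which is (ii); and the conjugate pairing forces $n+1$ to be even, so $n$ is odd, giving half of (i).

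Finally I would pin down the algebraic constraints by feeding the geometry back in. Imposing $|\psi|^2=1$ (image in $\mathbb{S}^n$) with $|H|=h$ and the orthogonality $\langle H,\psi\rangle=h(|\psi_1|^2-|\psi_2|^2)=0$ of the mean curvature to the radial position vector forces $|\psi_1|^2=|\psi_2|^2=\tfrac12$; constancy of these norms makes the exponents purely imaginary, i.e. $|\mu_k|=|\eta_j|=1$ (conditions (g)--(h)), and normalising gives $\sum_kR_k=\sum_jR'_j=1$ (condition (e)). Conformality $\langle\psi_z,\psi_z\rangle=0$ of the isometric immersion then reads $\lambda_1\sum_k\mu_k^2R_k+\lambda_2\sum_j\eta_j^2R'_j=0$, which is (f), while $\langle\psi_z,\psi_{\bar z}\rangle=\tfrac12$ recovers the flat metric. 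Distinctness of $\{\pm\mu_k\}$ and $\{\pm\eta_j\}$ is just the reduced, non-redundant form of the expansion, and fullness eliminates the small cases: a single family ($m=0$ or $m'=0$) would make $\psi$ of one type and violate $\langle H,\psi\rangle=0$, while $m=m'=1$ contradicts (f) (taking moduli would give $1-h=1+h$). Hence $m+m'\geq3$ and $n=2(m+m')-1\geq5$, which completes (i).
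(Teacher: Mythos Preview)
The paper does not prove Theorem~\ref{thmM}; it is quoted verbatim from \cite{LO} and introduced as ``a direct application of a result in \cite{M}'', so there is no in-paper argument to compare your proposal against. What you have written is a plausible reconstruction of how the proof in \cite{LO} runs, and the backbone is correct: the biharmonic equation in $\mathbb{S}^n$ does reduce to $\Delta^\phi H=2H$, the relation $H=\psi-\tfrac12\Delta^e\psi$ is right, and combining these (using $\langle H,d\psi\rangle=0$ and $|H|=h$) does give exactly $(\Delta^e-\lambda_1)(\Delta^e-\lambda_2)\psi=0$ with $\lambda_{1,2}=2(1\mp h)$. Your derivation of conditions (e)--(h) from $|\psi_1|^2=|\psi_2|^2=\tfrac12$ and conformality, and the exclusion of $m+m'\leq 2$, are also sound.

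There is one genuine soft spot. From the factored equation on $D$ you jump to a \emph{finite} sum of plane waves $e^{\frac{\sqrt{\lambda}}{2}(\mu z-\bar\mu\bar z)}$. On a disk (or even on all of $\mathbb{R}^2$ after analytic continuation) the Helmholtz equation $\Delta^e f=\lambda f$ has a continuum of bounded solutions---superpositions over all $|\mu|=1$---so the fourth-order equation alone does not force finitely many frequencies. What pins the expansion down to finitely many terms is the fullness in a finite-dimensional target together with the isometric/CMC constraints; this is precisely the content of Miyata's classification \cite{M} of flat $2$-type surfaces, which you should invoke at the representation step rather than at the derivation of the fourth-order PDE (that derivation is elementary and needs no external input). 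Once Miyata's structure theorem is applied, the rest of your bookkeeping goes through.
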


\begin{remark}[symmetries of solutions]
Let 
$$
(h,(R_k)_k,(R'_j)_j,(\mu_k)_k,(\eta_j)_j)=(R_k,R'_j,\mu_k,\eta_j)
$$ 
be a solution of (e), (f), (g) and (h). Then
\begin{itemize}
\item $(R_k,R'_j,\pm\mu_k,\pm\eta_j)$ is also a solution of Conditions~(e), (f), (g), and (h) ($2^{m+m'}$ solutions).
\item $(R_k,R'_j,\bar{\mu}_k,\bar{\eta}_j)$ is also a solution of Conditions~(e), (f), (g), and (h) (1 solution). 
\item $(R_k,R'_j,\alpha \mu_k,\alpha\eta_j),\alpha\in\mathbb{C}, |\alpha|=1$, is also a solution of Conditions~(e), (f), (g), and (h).
\item $\left(R_{\sigma(k)},R'_{\sigma'(j)},\mu_{\sigma(k)},\eta_{\sigma'(j)}\right)$, where $\sigma$ and $\sigma'$ are permutations of $\{1,\ldots,m\}$ and $\{1,\ldots,m'\}$, respectively, is also a solution of Conditions~(e), (f), (g), and (h) ($m!m'!$ solutions).
\end{itemize}
Each of the above solutions corresponds to an orthogonal linear transformation of $\mathbb{R}^2$ or $\mathbb{R}^n$. Since the classification of isometric immersions is up to isometries of the domain and codomain, our classification will be modulo the above four transformations.
\begin{itemize}
\item if we consider a translation $(x,y)\to(x+a,y+b)$ of the domain, then one obtains a new CMC proper-biharmonic immersion, corresponding to the same data set $(R_k,R'_j,\mu_k,\eta_j)$ but with different $Z_k$'s and $W_j$'s.
\end{itemize}
\end{remark}

\begin{remark} The numbers $m$ and $m'$ are invariant under isometries of $\mathbb{R}^2$ and $\mathbb{S}^n$.
\end{remark}

\begin{theorem}[\cite{LO}, Structure theorem in $\mathbb{S}^5$] \label{thm6}
For a given $h\in (0,1)$ there is a one-parameter family of CMC proper-biharmonic surfaces $\phi_{h,\rho}=\phi_{\rho}:\mathbb{R}^2\to\mathbb{S}^5$ with mean curvature $h$,
$\rho\in[0,(1/2)\arccos((h-1)/(1+h))]$, such that $\psi_{\rho}=i\circ\phi_{\rho}:\mathbb{R}^2 \to \mathbb{R}^6$ can be written as
\begin{align*}
\psi_{\rho}(z) = & \frac{1}{\sqrt{2}}\left( e^{\tfrac{\sqrt{\lambda_1}}{2}(z-\bar{z})}Z_1
+e^{\tfrac{\sqrt{\lambda_1}}{2}(-z+\bar{z})}\bar{Z}_1\right) \\
& +\frac{1}{\sqrt{2}} \sum_{j=1}^2\sqrt{R'_j}\left(e^{\tfrac{\sqrt{\lambda_2}}{2}(\eta_j z-\bar{\eta}_j\bar{z})}W_j
+ e^{\tfrac{\sqrt{\lambda_2}}{2}(-\eta_j z+\bar{\eta}_j
\bar{z})}\bar{W}_j\right) ,
\end{align*}
where
\begin{enumerate}
\item[(a)] $Z_1=\frac{1}{2}\left(E_1-\i E_2\right)$, 
\item[(b)] $W_j=\frac{1}{2}\left(E_{2(1+j)-1}-\i E_{2(1+j)}\right)$, \ $j\in\{1,2\}$, 
\item[(c)] $\{E_1,\ldots,E_6\}$ is an orthonormal basis of $\mathbb{R}^6$, 
\item[(d)] $\lambda_1=2(1-h)$, $\lambda_2=2(1+h)$,
\end{enumerate}
and $R'_1$, $R'_2$, $\eta_1=e^{\i\rho}$ and $\eta_2=e^{\i\tilde{\rho}}$ are given by
$$
\left(\frac{1-\left(\frac{1-h}{h+1}\right)^2}{2\left(1+ \frac{1-h}{h+1}\cos 2\rho\right)}, \ 1-\frac{1-\left(\frac{1-h}{h+1}\right)^2}{2\left(1+ \frac{1-h}{h+1}\cos 2\rho\right)}, \ \rho, \ \tilde{\rho}=\arctan{(-\frac{1}{h\tan{\rho}})} \right),
$$
if $\rho\in(0,(1/2)\arccos((h-1)/(1+h))]$, and
$$
\left(\frac{h}{1+h}, \ \frac{1}{1+h}, \ 0, \ -\frac{\pi}{2}\right),
$$
if $\rho=0$.

Conversely, let $\phi:\mathbb{R}^2\to\mathbb{S}^5$ be a CMC proper-biharmonic surface with mean curvature $h\in (0,1)$. Then, up to isometries of $\mathbb{R}^2$ and $\mathbb{S}^5$, $\psi=i\circ\phi:\mathbb{R}^2 \to \mathbb{S}^5$ is one of the above maps.
\end{theorem}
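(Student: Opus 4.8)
The plan is to obtain Theorem~\ref{thm6} as the $n=5$ specialization of the general classification in Theorem~\ref{thmM}, followed by an explicit resolution of Conditions~(a)--(h) and a reduction modulo the symmetries listed after Theorem~\ref{thmM}. First I would invoke the dimension relation $n=2m+2m'-1$ from Theorem~\ref{thmM}(iii)(c): with $n=5$ this forces $m+m'=3$. Since $\sum_k R_k=1$ and $\sum_j R'_j=1$ with all $R_k,R'_j>0$ (Condition~(e)), neither block can be empty, so $m\geq 1$ and $m'\geq 1$, leaving only $(m,m')\in\{(1,2),(2,1)\}$.

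The decisive step is to rule out $(m,m')=(2,1)$. Here $R_1+R_2=1$ and $R'_1=1$, and after normalizing $\eta_1=1$ by the phase symmetry $(R_k,R'_j,\alpha\mu_k,\alpha\eta_j)$, $|\alpha|=1$, Condition~(f) reads $(1-h)(\mu_1^2 R_1+\mu_2^2 R_2)=-(1+h)$. Using $|\mu_k|=1$, $R_k>0$ and $R_1+R_2=1$ gives $|\mu_1^2 R_1+\mu_2^2 R_2|\leq 1$, whereas the right-hand side has modulus $\frac{1+h}{1-h}>1$; this contradiction forces $(m,m')=(1,2)$. The same modulus estimate is exactly what renders $(1,2)$ admissible: there, after normalizing $\mu_1=1$ (so $Z_1=(E_1-\i E_2)/2$, $R_1=1$, matching (a)), Condition~(f) becomes $\eta_1^2 R'_1+\eta_2^2 R'_2=-\frac{1-h}{1+h}$, whose right-hand side has modulus $\frac{1-h}{1+h}<1$ and is therefore attainable as a convex combination of points on the unit circle.

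With $(m,m')=(1,2)$ fixed, writing $\eta_1=e^{\i\rho}$, $\eta_2=e^{\i\tilde\rho}$ and setting $c=\frac{1-h}{1+h}$, I would split $\eta_1^2 R'_1+\eta_2^2 R'_2=-c$ into real and imaginary parts and adjoin the normalization $R'_1+R'_2=1$. Eliminating $R'_1,R'_2$ from the imaginary equation (legitimate for $\rho\neq\tilde\rho$, guaranteed by the distinctness Condition~(h)) and using product-to-sum identities reduces to $\tan\rho\,\tan\tilde\rho=\frac{c+1}{c-1}=-\frac{1}{h}$, that is $\tilde\rho=\arctan(-\frac{1}{h\tan\rho})$; the real equation then yields $R'_1=\frac{1-c^2}{2(1+c\cos 2\rho)}$ and $R'_2=1-R'_1$. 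These are precisely the stated expressions, and the limit $\rho\to 0$ (where $\tilde\rho\to-\frac{\pi}{2}$) recovers the separately listed datum $(\frac{h}{1+h},\frac{1}{1+h},0,-\frac{\pi}{2})$.

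Finally I would pin down the parameter range and deduce the converse at the same time. One checks directly that $R'_1,R'_2\in(0,1)$ for every real $\rho$, so positivity imposes no restriction; instead the interval $\rho\in[0,\frac{1}{2}\arccos\frac{h-1}{1+h}]$ arises as a fundamental domain for the symmetries fixing the normalization $\mu_1=1$. Complex conjugation $\eta_1\mapsto\bar\eta_1$ reflects $\rho\mapsto-\rho$ and lets us take $\rho\geq 0$, while the permutation $\eta_1\leftrightarrow\eta_2$ together with $R'_1\leftrightarrow R'_2$ lets us select the representative with $R'_1\leq R'_2$; since $R'_1$ increases monotonically from $\frac{h}{1+h}$ to $\frac{1}{2}$ as $\cos 2\rho$ decreases from $1$ to $-c$, this half-orbit is exactly $[0,\frac{1}{2}\arccos(-c)]$, the upper endpoint being the symmetric configuration $R'_1=R'_2=\frac{1}{2}$. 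Because Theorem~\ref{thmM} already classifies every CMC proper-biharmonic surface in $\mathbb{S}^5$ up to isometry, this complete and reversible reduction of Conditions~(a)--(h) establishes both the existence of the family and the converse. The main obstacle I anticipate is not a single computation but the bookkeeping of this last step: quotienting by the symmetry group so that $\rho$ parametrizes the surfaces without redundancy, which is what produces the precise bound $\frac{1}{2}\arccos\frac{h-1}{1+h}$.
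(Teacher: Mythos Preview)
Your proposal is correct and follows essentially the same route as the paper: specialize Theorem~\ref{thmM} to $n=5$, exclude $(m,m')=(2,1)$, and solve Conditions~(e)--(h) in the case $(m,m')=(1,2)$ to obtain $\tilde\rho$ and $R'_1,R'_2$, then reduce the parameter $\rho$ modulo the listed symmetries. The paper merely states that $(2,1)$ admits no solution and defers the $(1,2)$ analysis to Lemmas~\ref{lemma1} and~\ref{lemma2} (both quoted from \cite{LO}); your convexity/modulus argument for the impossibility of $(2,1)$ and your direct trigonometric derivation of $\tan\rho\tan\tilde\rho=-1/h$ and $R'_1=\dfrac{1-c^2}{2(1+c\cos 2\rho)}$ make those steps self-contained.
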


The proof of the above theorem is based on the fact that we cannot take $m=2$ and $m'=1$, as there are no solutions of Conditions~(e), (f), (g), and (h) in Theorem~\ref{thmM}. The solution in the remaining case $m=1$ and $m'=2$, follows from the next lemma.

\begin{lemma}[\cite{LO}] \label{lemma1}
Let $\eta_1 = e^{\i \rho}$ with, because of symmetries of solutions, $\rho \in [0,\pi/2]$ and $\eta_2 = e^{\i \tilde{\rho}}$, $\tilde{\rho} \in [-\pi/2 , \pi/2)$, then 
\begin{equation*}
\tilde{\rho} = 
\begin{cases}
-\tfrac{\pi}{2},\quad\textnormal{if} \quad\rho=0 \\
0,\quad\textnormal{if} \quad\rho= \tfrac{\pi}{2} \\
\arctan\left( \frac{-1}{h\tan\rho}\right),\quad\textnormal{otherwise.}
\end{cases}
\end{equation*}
In particular, $\tilde{\rho} \in [-\pi/2 , 0]$.
\end{lemma}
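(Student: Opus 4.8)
The plan is to extract the single nontrivial constraint, Condition~(f) of Theorem~\ref{thmM}, in the case $m=1$, $m'=2$, and solve it for $\tilde\rho$ as a function of $\rho$ and $h$. By Condition~(e) we have $R_1=1$ and $R'_1+R'_2=1$ with $R'_1,R'_2>0$, while the rotational symmetry $(\mu_k,\eta_j)\mapsto(\alpha\mu_k,\alpha\eta_j)$, $|\alpha|=1$, lets me normalize $\mu_1=1$. With $\eta_1=e^{\i\rho}$ and $\eta_2=e^{\i\tilde\rho}$, Condition~(f) then reads
$$
(1-h)+(1+h)\bigl(R'_1 e^{2\i\rho}+R'_2 e^{2\i\tilde\rho}\bigr)=0,
$$
and I would split this into its imaginary and real parts,
$$
R'_1\sin 2\rho+R'_2\sin 2\tilde\rho=0,\qquad R'_1\cos 2\rho+R'_2\cos 2\tilde\rho=-\tfrac{1-h}{1+h},
$$
to be solved together with $R'_1+R'_2=1$.

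First I would dispose of the endpoints. If $\rho=0$ the imaginary part forces $\sin 2\tilde\rho=0$, so $\tilde\rho\in\{0,-\pi/2\}$; substituting into the real part and imposing $R'_1,R'_2>0$ rules out $\tilde\rho=0$ and leaves $\tilde\rho=-\pi/2$ (with $R'_1=h/(1+h)$, $R'_2=1/(1+h)$). The same argument applied to $\rho=\pi/2$ gives $\tilde\rho=0$.

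For the generic case $\rho\in(0,\pi/2)$ I would eliminate $R'_1,R'_2$. Using $R'_1+R'_2=1$, the imaginary part gives $R'_2=\sin 2\rho/(\sin 2\rho-\sin 2\tilde\rho)$, and inserting this into the real part produces, after clearing denominators, the identity
$$
\frac{\sin(2\rho-2\tilde\rho)}{\sin 2\rho-\sin 2\tilde\rho}=-\frac{1-h}{1+h}.
$$
The main computational step is then the trigonometric simplification: the product-to-sum formulas collapse the left-hand side to $\cos(\rho-\tilde\rho)/\cos(\rho+\tilde\rho)$, so that $(1+h)\cos(\rho-\tilde\rho)=(h-1)\cos(\rho+\tilde\rho)$. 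Expanding both cosines, the $\cos\rho\cos\tilde\rho$ and $\sin\rho\sin\tilde\rho$ terms separate and yield $\cos\rho\cos\tilde\rho=-h\sin\rho\sin\tilde\rho$, i.e. $\tan\tilde\rho=-1/(h\tan\rho)$, which is the asserted formula.

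Finally I would pin down the range. Since $\sin 2\rho>0$ on $(0,\pi/2)$, the imaginary part forces $\sin 2\tilde\rho<0$, hence $\tilde\rho\in(-\pi/2,0)$; this also makes both $R'_2=\sin 2\rho/(\sin 2\rho-\sin 2\tilde\rho)$ and $R'_1=1-R'_2$ strictly positive, so Condition~(e) is satisfied, and combining with the endpoints one concludes $\tilde\rho\in[-\pi/2,0]$. The chief obstacle is the trigonometric collapse above; everything else is bookkeeping. One should also verify that the resulting $\eta_1,\eta_2$ obey Condition~(h) (the four numbers $\pm\eta_1,\pm\eta_2$ are distinct), which is immediate from $\rho\in[0,\pi/2]$ and $\tilde\rho\in[-\pi/2,0]$.
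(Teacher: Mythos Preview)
The paper does not give its own proof of this lemma; it is quoted from \cite{LO} without argument, so there is nothing to compare against directly. Your proof is correct: normalizing $\mu_1=1$ via the rotational symmetry, splitting Condition~(f) into real and imaginary parts, disposing of the endpoints $\rho\in\{0,\pi/2\}$ by direct substitution, and for generic $\rho$ eliminating $R'_1,R'_2$ to reach
\[
\frac{\sin(2\rho-2\tilde\rho)}{\sin 2\rho-\sin 2\tilde\rho}=\frac{\cos(\rho-\tilde\rho)}{\cos(\rho+\tilde\rho)}=-\frac{1-h}{1+h}
\]
and hence $\tan\tilde\rho=-1/(h\tan\rho)$ is exactly the natural route. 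Your range argument and the verification of Conditions~(e) and (h) are sound.

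One small point worth making explicit: the division by $\sin 2\rho-\sin 2\tilde\rho$ when solving for $R'_2$ is legitimate, since equality of the two sines together with the imaginary part would force $(R'_1+R'_2)\sin 2\rho=0$, which is impossible for $\rho\in(0,\pi/2)$.
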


Another technical result that will be used later is the following lemma.

\begin{lemma}[\cite{LO}]\label{lemma2}
Let $t=\tan(\rho/2)$, $\rho\in[0,\pi/2]$, then $\eta_1 = e^{\i \rho}$ and $\eta_2 = e^{\i \tilde\rho}$ are solutions of 
$$ (1-h) + (1+h)s\eta^2_1  + (1+h)(1-s)\eta^2_2 =0 ,$$
if and only if $s\in[h/(1+h), 1/(1+h)]$,
$$ 
\tan\tilde\rho= \frac{-1}{h\tan \rho} \quad \mbox{when}\quad s\in \left(\frac{h}{1+h}, \frac{1}{1+h}\right) ,
$$ 
$\tilde{\rho}=-\pi/2$ for $s=h/(h+1)$ and $\tilde{\rho}=0$ for $s=1/(h+1)$,
and
\begin{equation*}
t = 
\begin{cases}
0,\quad\textnormal{if}\quad s=\frac{h}{1+h}\\
1,\quad\textnormal{if} \quad s=\frac{1}{1+h}\\
\frac{\sqrt{s(1-h^2)} - \sqrt{h(1-s-hs)}}{\sqrt{s-h+hs}},\quad\textnormal{otherwise.}
\end{cases}
\end{equation*}
\end{lemma}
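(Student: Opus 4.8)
The plan is to treat the displayed complex equation as its pair of real equations. Writing $\eta_1^2 = e^{2\i\rho}$ and $\eta_2^2 = e^{2\i\tilde\rho}$ and setting $k = (1-h)/(1+h) \in (0,1)$, the imaginary part becomes $s\sin 2\rho + (1-s)\sin 2\tilde\rho = 0$ and the real part becomes $s\cos 2\rho + (1-s)\cos 2\tilde\rho = -k$. I would first eliminate $\tilde\rho$: from $(1-s)\sin 2\tilde\rho = -s\sin 2\rho$ and $(1-s)\cos 2\tilde\rho = -k - s\cos 2\rho$, squaring and adding and using $\sin^2 2\tilde\rho + \cos^2 2\tilde\rho = 1$ gives $(1-s)^2 = k^2 + 2ks\cos 2\rho + s^2$, which reduces to the key relation
$$ s = \frac{1-k^2}{2(1 + k\cos 2\rho)}. $$
This elimination is reversible: once this relation holds, the two linear expressions above are consistent with being the sine and cosine of a single angle, so they determine $\tilde\rho$ uniquely in the range of Lemma \ref{lemma1}.

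Next I would study the key relation as $\rho$ varies over $[0,\pi/2]$. Since $k \in (0,1)$, the denominator $1 + k\cos 2\rho$ stays positive and decreases strictly from $1+k$ to $1-k$ as $\rho$ increases from $0$ to $\pi/2$ (because $\cos 2\rho$ decreases from $1$ to $-1$); hence $s$ is a continuous, strictly increasing function of $\rho$. Evaluating at the endpoints yields $s = h/(1+h)$ at $\rho = 0$ and $s = 1/(1+h)$ at $\rho = \pi/2$, which establishes both the range $s \in [h/(1+h), 1/(1+h)]$ and a bijection between admissible $\rho$ and admissible $s$. The assertions on $\tilde\rho$ — the relation $\tan\tilde\rho = -1/(h\tan\rho)$ on the interior and the boundary values $\tilde\rho = -\pi/2$ and $\tilde\rho = 0$ — then follow from Lemma \ref{lemma1} applied to the corresponding values of $\rho$.

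To obtain the explicit formula for $t = \tan(\rho/2)$, I would invert the key relation to $\cos 2\rho = (1 - k^2 - 2s)/(2ks)$ and then compute $\cos^2\rho = (1+\cos 2\rho)/2$ and $\sin^2\rho = (1 - \cos 2\rho)/2$, substituting $k = (1-h)/(1+h)$. After simplification this should give
$$ \cos^2\rho = \frac{h(1-s-hs)}{s(1-h^2)}, \qquad \sin^2\rho = \frac{s-h+hs}{s(1-h^2)}, $$
both nonnegative on the admissible range. Since $\rho \in [0,\pi/2]$ both $\cos\rho$ and $\sin\rho$ are nonnegative, so the half-angle identity $t = (1-\cos\rho)/\sin\rho$, after clearing the common factor $\sqrt{s(1-h^2)}$, reproduces exactly the stated expression.

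The main difficulty I anticipate is bookkeeping rather than conceptual: the algebraic reductions converting the $k$-expressions back into $h$ and simplifying under the radicals must be handled carefully. I also expect to treat the endpoint $s = h/(1+h)$ separately, since there $\rho = 0$ and the general formula degenerates to $0/0$ (both $\sqrt{s(1-h^2)} - \sqrt{h(1-s-hs)}$ and $\sqrt{s-h+hs}$ vanish), so the value $t = 0$ must be read off from $\rho = 0$; the endpoint $s = 1/(1+h)$ instead gives $t = 1$ directly from the formula.
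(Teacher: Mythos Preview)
The paper does not actually prove Lemma~\ref{lemma2}: it is quoted from \cite{LO} and stated without proof, so there is no ``paper's own proof'' to compare against. That said, your argument is correct and self-contained. Splitting the complex equation into real and imaginary parts, eliminating $\tilde\rho$ by squaring and adding to get $s=(1-k^2)/\bigl(2(1+k\cos 2\rho)\bigr)$, and reading off the monotone bijection $\rho\leftrightarrow s$ on $[0,\pi/2]\leftrightarrow[h/(1+h),1/(1+h)]$ is exactly the right move; the endpoint values check, and your computation of $\cos^2\rho$ and $\sin^2\rho$ in terms of $s,h$ is accurate, yielding the stated half-angle formula for $t$ after clearing the common factor $\sqrt{s(1-h^2)}$. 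Your remark that the endpoint $s=h/(1+h)$ gives a genuine $0/0$ and must be handled by $\rho=0$ directly is also correct, as is the observation that $s=1/(1+h)$ plugs in to give $t=1$.

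One minor comment: invoking Lemma~\ref{lemma1} for the form of $\tilde\rho$ is legitimate in the paper's logical structure (both lemmas are imported from \cite{LO}), but you could equally well read $\tilde\rho$ off directly from your own equations $(1-s)\sin 2\tilde\rho=-s\sin 2\rho$ and $(1-s)\cos 2\tilde\rho=-k-s\cos 2\rho$, which determine $2\tilde\rho$ modulo $2\pi$; a short check then pins down $\tilde\rho\in[-\pi/2,0]$ and recovers $\tan\tilde\rho=-1/(h\tan\rho)$ without appealing to the other lemma. This would make your proof entirely independent of Lemma~\ref{lemma1}.
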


Now, consider a CMC proper-biharmonic immersion $\phi:\mathbb{R}^2\to\mathbb{S}^n$ with mean curvature $h=|H|\in(0,1)$. It is not difficult to check that $\psi=i\circ\phi$ satisfies $\psi(z_1)=\psi(z_2)$ if and only if $\psi(z_1-z_2)=\psi(0)$. Note that $\psi(z)=0$ is equivalent to
$$
\langle \i\sqrt{\lambda_1}\bar\mu_k,z\rangle\equiv 0 \pmod{2\pi},\quad\forall k\in\{1,\ldots,m\}
$$
and
$$
\langle \i\sqrt{\lambda_2}\bar\eta_j,z\rangle\equiv 0 \pmod{2\pi},\quad\forall j\in\{1,\ldots,m'\}.
$$

Next, we define $\Lambda_{\psi}=\{z\in\mathbb{R}^2:\psi(z)=\psi(0)\}$ and note that $\Lambda_{\psi}$ is a discrete lattice and the immersion $\psi$ quotients to an embedding $\psi:\mathbb{R}^2/\Lambda_{\psi}\to\mathbb{R}^{n+1}$ or $\phi:\mathbb{R}^2/\Lambda_{\psi}\to\mathbb{S}^n$.

Now, if $\Lambda$ is a discrete lattice, then $\psi$ quotients to $\mathbb{R}^2/\Lambda$ if and only if $\Lambda\subset\Lambda_{\psi}$, i.e., $\Lambda$ is an abelian subgroup of $\Lambda_{\psi}$. Obviously, if $\rank\Lambda=2$, since $\Lambda\subset\Lambda_{\psi}$, then $\rank\Lambda_{\psi}=2$ and $\mathbb{R}^2/\Lambda$ is a covering space for $\mathbb{R}^2/\Lambda_{\psi}$. Therefore, if $\psi$ quotients to a torus $T^2=\mathbb{R}^2/\Lambda$ then it quotients to a (possible different) torus $T^2=\mathbb{R}^2/\Lambda_{\psi}$ providing an embedding with the same mean curvature.

In the following we will present a new proof to a classification result for CMC proper-biharmonic tori in $\mathbb{S}^5$.

\begin{proposition}[\cite{LO}]\label{prop12}
The CMC proper-biharmonic immersion $\phi_{h,\rho} : \mathbb{R}^2 \to \mathbb{S}^5$, $\rho\in [0,(1/2)\arccos((h-1)/(1+h))]$, quotients to a torus if and only if either
\begin{itemize}

\item[(a)] $\rho=0$ and
$$
h=\frac{1-b}{1+b},
$$
where $b=r^2/t^2$, $r,t\in\mathbb{N}^{\ast}$, with $r<t$ and $(r,t)=1$; or

\item[(b)] $\rho\in (0,(1/2)\arccos((h-1)/(1+h))]$ is a constant depending on $a$ and $b$ and
$$
h = \frac{1- (a-b)^2}{1 + (a-b)^2 + 2(a+b)},
$$
where $a=p^2/q^2$ and $b=r^2/t^2$, with $p,q,r,t \in \mathbb{N}^{\ast}$, such that $0\leq b-a<1$.
\end{itemize}
Moreover, in this case, the corresponding lattice $\Lambda_{\psi_{h,\rho}}$ is given by
\begin{align*}
\Lambda_{\psi_{h,0}}=&\{lrv_2+kv_1:k,l\in\mathbb{Z}\}\\=&\{lt\tilde v_2+kv_1:k,l\in\mathbb{Z}\},
\end{align*}
where $v_1=(-\pi\sqrt{1+b},0)$, $v_2=(0,\pi\sqrt{(1+b)/b})$, and $\tilde v_2=(0,\pi\sqrt{1+b})$; or
\begin{align*}
\Lambda_{\psi_{h,\rho}}=&\left\{ m v_2 + n v_1:m, n \in \mathbb{Z} \mbox{ \upshape{s.t.} } m \frac{q}{p} - n \frac{qr}{pt} \in \mathbb{Z}\right\}\\=&\left\{ m \tilde v_2 + k \tilde v_1 : m, k \in \mathbb{Z} \mbox{ \upshape{s.t.} } m \frac{t}{r} - k \frac{pt}{qr} \in \mathbb{Z}\right\},
\end{align*}
where 
\begin{flalign}\label{eq:v}
v_1&=\left(\frac{\pi\sqrt{(a-b)^2+a+b}}{\sqrt{a}},0\right)\\\nonumber
v_2&=\left(-\frac{\pi\sqrt{b}(1-a+b)}{\sqrt{a((a-b)^2+a+b)}},\frac{\pi\sqrt{1+(a-b)^2+2(a+b)}}{\sqrt{(a-b)^2+a+b}}\right)\\\nonumber
\tilde v_1&=\left(\frac{\pi\sqrt{(a-b)^2+a+b}}{\sqrt{b}},0\right)\\\nonumber
\tilde v_2&=\left(-\frac{\pi\sqrt{a}(1+a-b)}{\sqrt{b((a-b)^2+a+b)}},\frac{\pi\sqrt{1+(a-b)^2+2(a+b)}}{\sqrt{(a-b)^2+a+b}}\right).
\end{flalign}
\end{proposition}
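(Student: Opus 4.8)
Since the discussion preceding the statement already shows that $\phi_{h,\rho}$ descends to a torus exactly when $\rank\Lambda_{\psi}=2$, the whole proof reduces to a commensurability criterion for the three ``frequency covectors'' carried by the phases of $\psi$. My plan is to (i) rephrase $\Lambda_{\psi}$ through these covectors, (ii) extract the rationality conditions, (iii) solve them in the two regimes $\rho=0$ and $\rho\neq 0$, and (iv) read off the generators of $\Lambda_{\psi}$. Writing each phase in real coordinates $z=(x,y)$ as $\mathrm{Im}(\eta z)$ turns the displayed congruences into $\langle w_i,z\rangle\in 2\pi\mathbb{Z}$, $i=0,1,2$, with $w_0=\i\sqrt{\lambda_1}\,\bar\mu_1=\sqrt{\lambda_1}\,(0,1)$ (since $\mu_1=1$), $w_1=\i\sqrt{\lambda_2}\,\bar\eta_1=\sqrt{\lambda_2}\,(\sin\rho,\cos\rho)$ and $w_2=\i\sqrt{\lambda_2}\,\bar\eta_2=\sqrt{\lambda_2}\,(\sin\tilde{\rho},\cos\tilde{\rho})$. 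Thus $\Lambda_{\psi}$ consists of the $z$ pairing integrally with $\tfrac{1}{2\pi}w_0,\tfrac{1}{2\pi}w_1,\tfrac{1}{2\pi}w_2$, so $\rank\Lambda_{\psi}=2$ iff the subgroup these generate in $\mathbb{R}^2$ is discrete, i.e. iff the three covectors lie in a common rank-$2$ lattice.

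When $\rho\neq 0$ the covectors are pairwise independent, and I would express $w_2=\gamma_0 w_0+\gamma_1 w_1$ with $\gamma_1=\sin\tilde{\rho}/\sin\rho$ and $\gamma_0=(\sqrt{\lambda_2}/\sqrt{\lambda_1})\,\sin(\rho-\tilde{\rho})/\sin\rho$. Passing to the basis $e_0,e_1$ of the rank-$2$ superlattice $\Gamma=\{z:\langle w_0,z\rangle,\langle w_1,z\rangle\in 2\pi\mathbb{Z}\}$, a point $m e_0+n e_1$ lies in $\Lambda_{\psi}$ iff $\gamma_0 m+\gamma_1 n\in\mathbb{Z}$; commensurability is then equivalent to $\gamma_0,\gamma_1\in\mathbb{Q}$ (if, say, $\gamma_1\notin\mathbb{Q}$, then for each $m$ at most one $n$ solves $\gamma_0 m+\gamma_1 n\in\mathbb{Z}$, forcing $\rank\Lambda_{\psi}\leq 1$). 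The case $\rho=0$ is genuinely degenerate and produces (a): here $\eta_2=-\i$ makes $w_0,w_1$ both vertical and $w_2$ horizontal, so the sole constraint is $w_1\in\mathbb{Q}\,w_0$, i.e. $\sqrt{\lambda_2/\lambda_1}=\sqrt{(1+h)/(1-h)}\in\mathbb{Q}$. Writing this as $t/r$ with $(r,t)=1$, $r<t$, and setting $b=r^2/t^2$ gives $h=(1-b)/(1+b)$; the horizontal period comes from $w_2$ and the vertical one from the intersection of the two vertical period-lattices, which a short computation turns into the stated $v_1,v_2,\tilde v_2$.

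For $\rho\neq 0$ I would substitute $\tan\tilde{\rho}=-1/(h\tan\rho)$ from Lemma~\ref{lemma1} into $\gamma_0,\gamma_1$, then set $a=1/\gamma_0^2$ and $b=\gamma_1^2/\gamma_0^2$, so that $\gamma_0=1/\sqrt a>0$ and $\gamma_1=-\sqrt{b/a}<0$. The criterion $\gamma_0,\gamma_1\in\mathbb{Q}$ is then exactly the requirement that $a,b$ be squares of positive rationals, $a=p^2/q^2$, $b=r^2/t^2$ (giving $\gamma_0=q/p$, $\gamma_1=-qr/(pt)$). Inverting the two relations for the unknowns $(h,\cos^2\rho)$, using $\lambda_1=2(1-h)$ and $\lambda_2=2(1+h)$, should yield $h=\bigl(1-(a-b)^2\bigr)/\bigl(1+(a-b)^2+2(a+b)\bigr)$ and determine $\rho$ as a function of $a,b$; the admissible range of $\rho$ together with $h\in(0,1)$ translates into $0\leq b-a<1$. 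Finally, rewriting $e_1=(2\pi/(\sqrt{\lambda_2}\sin\rho),0)$ and $e_0=(-2\pi\cos\rho/(\sqrt{\lambda_1}\sin\rho),2\pi/\sqrt{\lambda_1})$ through the $(a,b)$-substitution identifies them with $v_1$ and $v_2$ in \eqref{eq:v}, whereupon $\Lambda_{\psi}=\{m v_2+n v_1:\gamma_0 m+\gamma_1 n\in\mathbb{Z}\}$ becomes precisely the congruence $m\frac qp-n\frac{qr}{pt}\in\mathbb{Z}$; building $\Gamma$ from the pair $\{w_0,w_2\}$ instead delivers the alternative basis $\tilde v_1,\tilde v_2$.

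The main obstacle is the algebra of the third paragraph: disentangling the two simultaneous rationality conditions — which are genuinely transcendental in $(h,\rho)$ because of the factor $\sqrt{(1+h)/(1-h)}$ and the auxiliary angle $\tilde{\rho}$ — into the clean parametrization by $(a,b)$, and then checking that the closed form for $h$ together with the constraints $0\le b-a<1$, $(r,t)=1$, $r<t$ matches exactly the interval $\rho\in(0,\tfrac12\arccos((h-1)/(h+1))]$ of Theorem~\ref{thm6}. The entire covector reformulation is engineered precisely to reduce this step to tracking two rational numbers.
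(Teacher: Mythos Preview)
Your proposal is correct and is essentially the same argument as the paper's, just recast in dual-lattice language: your covectors $w_0,w_1,w_2$ and coefficients $\gamma_0,\gamma_1$ are exactly the paper's three periodicity conditions and its quantities $B,A$ (one checks $\gamma_0=B$, $\gamma_1=A$), your dual basis $e_0,e_1$ coincides with the paper's $v_2,v_1$, and your rank-$2$ criterion ``$\gamma_0,\gamma_1\in\mathbb{Q}$'' is the paper's ``$A,B\in\mathbb{Q}^\ast$''. The one practical difference is how the algebra you flag as the main obstacle is handled: rather than inverting directly for $(h,\cos^2\rho)$, the paper passes through the parameter $s=R_1'$ of Lemma~\ref{lemma2}, which turns $A$ and $B$ into simple algebraic functions of $(s,h)$ and reduces the two rationality constraints to a pair of quadratics in $s$ whose compatibility immediately gives $s=\tfrac{1+a-b}{2}$ and the stated formula for $h$; the range $s\in(h/(1+h),1/2]$ then yields $0\le b-a<1$ without further trigonometric manipulation.
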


\begin{proof} We will first consider the case when $\rho\in (0,(1/2)\arccos((h-1)/(1+h))]$ and we will explicitly determine $\Lambda_{\psi_{h,\rho}}$ as well as the necessary and sufficient conditions such that $\rank \Lambda_{\psi_{h,\rho}}=2$. 

Consider now $v=(T,V)\in\Lambda_{\psi_{h,\rho}}$ and use the same notations as in Lemma \ref{lemma2}. From $\psi(z)=\psi(0)$ we have
$$
\langle \i\sqrt{\lambda_1},v\rangle=\sqrt{\lambda_1}V=2\pi m,
$$
$$
\langle \i\sqrt{\lambda_2}\bar\eta_1,v\rangle=\sqrt{\lambda_2}(T\sin\rho+V\cos\rho)=2\pi n
$$
and
$$
\langle \i\sqrt{\lambda_2}\bar\eta_2,v\rangle=\sqrt{\lambda_2}(T\sin\tilde\rho+V\cos\tilde\rho)=2\pi k,
$$
where $m,n,k\in\mathbb{Z}$. It follows that $V=(2\pi/\sqrt{\lambda_1})m $ and
$$
T=\frac{2\pi n}{\sqrt{\lambda_2}\sin\rho}-\frac{2\pi m}{\sqrt{\lambda_1}\tan\rho}=\frac{2\pi k}{\sqrt{\lambda_2}\sin\tilde\rho}-\frac{2\pi m}{\sqrt{\lambda_1}\tan\tilde\rho}.
$$
From the two expressions of $T$ one obtains the following condition
\begin{equation}\label{eq:comp1}
nA+mB=k,
\end{equation}
where $A=\sin\tilde\rho/\sin\rho<0$ and $B=-\sqrt{\lambda_2/\lambda_1}A(1+h\tan^2\rho)\cos\rho>0$, that can be rewritten as
\begin{equation}\label{eq:comp2}
k\tilde A+m\tilde B=n,
\end{equation}
with $\tilde A=1/A=\sin\rho/\sin\tilde\rho$ and $\tilde B=-B/A=\sqrt{\lambda_2/\lambda_1}(1+h\tan^2\rho)\cos\rho$.

Therefore, the vector $v$ can be written as 
$$
v=nv_1+mv_2
$$
where
\begin{equation}\label{eq:v1}
v_1=\left(\frac{2\pi}{\sqrt{\lambda_2}\sin\rho},0\right)\quad\mbox{and}\quad v_2=\left(-\frac{2\pi}{\sqrt{\lambda_1}\tan\rho},\frac{2\pi}{\sqrt{\lambda_1}}\right),
\end{equation}
or, equivalently,
$$
v=k\tilde v_1+m\tilde v_2
$$
where
\begin{equation}\label{eq:v2}
\tilde v_1=\left(\frac{2\pi}{\sqrt{\lambda_2}\sin\tilde\rho},0\right)\quad\mbox{and}\quad \tilde v_2=\left(-\frac{2\pi}{\sqrt{\lambda_1}\tan\tilde\rho},\frac{2\pi}{\sqrt{\lambda_1}}\right).
\end{equation}

Since 
$$
\begin{cases}
v_1=A\tilde v_1\\ v_2=\tilde v_2+B\tilde v_1,
\end{cases}
$$
it follows that
\begin{align*}
\Lambda_{\psi_{h,\rho}}=&\left\{mv_2+nv_1:m, n \in \mathbb{Z} \mbox{ \upshape{s.t.} } nA+mB \in \mathbb{Z}\right\}\\=&\left\{m\tilde v_2+k\tilde v_1: m, k \in \mathbb{Z} \mbox{ \upshape{s.t.} } k\tilde A+m\tilde B\in \mathbb{Z}\right\}.
\end{align*}

The next step is to show that $\rank\Lambda_{\psi_{h,\rho}}=2$ if and only if $A,B\in\mathbb{Q}^{\ast}$.

First, assume that $\rank\Lambda_{\psi_{h,\rho}}=2$. This is equivalent to the fact that there exist $w_1,w_2\in\Lambda_{\psi_{h,\rho}}$, given by
$$
w_1=m_1v_2+n_1v_1\quad\mbox{and}\quad w_2=m_2v_2+n_2v_1,
$$
with $n_1A+m_1B=l_1\in\mathbb{Z}$ and $n_2A+m_2B=l_2\in\mathbb{Z}$, such that $\{w_1,w_2\}$ is linearly independent over $\mathbb{R}$.

If $n_1=0$ it follows that $m_1\neq 0$ and $B=l_1/m_1\in\mathbb{Q}^{\ast}$. Also $n_2$ cannot vanish since otherwise $w_2\parallel w_1$. Therefore, we have $A=(l_2-m_2B)/n_2\in\mathbb{Q}^{\ast}$. The other three cases when $n_1m_1n_2m_2=0$ follow in the same way. 

Assume now that $n_1m_1n_2m_2\neq 0$ and we have the following system
$$
\begin{cases}
n_1A+m_1B=l_1\\n_2A+m_2B=l_2,
\end{cases}
$$
in $A$ and $B$. Since $\{w_1,w_2\}$ is linearly independent over $\mathbb{R}$, we see that the discriminant of this system is different from zero, which means that $A$ and $B$ are uniquely determined and $A,B\in\mathbb{Q}^{\ast}$.

Conversely, assume that $A=-p'/q'$ and $B=p''/q''$, where $p',q',p'',q''\in\mathbb{N}^{\ast}$ with $(p',q')=1$ and $(p'',q'')=1$. It is then not hard to see that $\{m(q''v_2)+n(q'v_1):m,n\in\mathbb{Z}\}\subset\Lambda_{\psi_{h,\rho}}$ and $\{q'v_1,q''v_2\}$ is a linearly independent system over $\mathbb{R}$.

To handle $A$ and $B$ it is more convenient to express them in terms of $s$ and $h$. To this end we use Lemma \ref{lemma2}, perform a computation similar to that in the proof of \cite[Proposition~12]{LO}, and obtain
$$
A=-\sqrt{\frac{s(1-s-hs)}{(1-s)(s-h+hs)}}\quad\mbox{and}\quad
B=\sqrt{\frac{h}{(1-s)(s-h+hs)}}.
$$
We have seen that $\rank\Lambda_{\psi_{h,\rho}}=2$ if and only if $A,B\in\mathbb{Q}^{\ast}$, which is equivalent to $1/B,-A/B\in\mathbb{Q}^{\ast}$. These conditions lead to
$$
\begin{cases}
\sqrt{\frac{(1-s)(s-h+hs)}{h}}=\frac{p}{q}\\\sqrt{\frac{s(1-s-hs)}{h}}=\frac{r}{t}
\end{cases},\quad p,q,r,t\in\mathbb{N}^{\ast},\quad (p,q)=1,\quad (r,t)=1.
$$
Denoting $a=p^2/q^2\in\mathbb{Q}^{\ast}_+$ and $b=r^2/t^2\in\mathbb{Q}^{\ast}_+$ the above system can be rewritten as
$$
\begin{cases}
(1+h)s^2-(1+2h)s+(1+a)h=0\\(1+h)s^2-s+bh=0.
\end{cases}
$$
Therefore $A,B\in\mathbb{Q}^{\ast}$ if and only if the last system in $s$ admits (at least) one solution $s\in(h/(1+h),1/2]$, where we have also used the fact that $\rho\in (0,(1/2)\arccos((h-1)/(1+h))]$ if and only if $s\in(h/(1+h),1/2]$, which follows from Lemma \ref{lemma2}. This is equivalent to 
$$
h=\frac{1-(a-b)^2}{1+(a-b)^2+2(a+b)},\quad 0\leq b-a<1,
$$
and the solution is
$$
s=\frac{1+a-b}{2}.
$$

To end the proof we will determine the explicit expression of $\Lambda_{\psi_{h,\rho}}$ in terms of $a$ and $b$. We denote $t=\tan(\rho/2)$, $t\in(0,\sqrt{h+1}-\sqrt{h}]$, and obtain, after a long but straightforward computation,
\begin{align*}
t=&\frac{\sqrt{s(1-h^2)}-\sqrt{h(1-s-hs)}}{\sqrt{s-h+hs}}\\=&\frac{\sqrt{(1+a+b)((a-b)^2+a+b)}-\sqrt{b}(1-a+b)}{\sqrt{a(1+(a-b)^2+2(a+b))}}
\end{align*}
and
$$
\sin\rho=\frac{2t}{1+t^2}=\sqrt{\frac{a(1+(a-b)^2+2(a+b))}{(1+a+b)((a-b)^2+a+b)}},
$$
$$
\cos\rho=\frac{1-t^2}{1+t^2}=\sqrt{\frac{b}{(1+a+b)((a-b)^2+a+b)}}(1-a+b),
$$
$$
\sin\tilde\rho=\frac{t^2-1}{\sqrt{(1-t^2)^2+4h^2t^2}}=-\sqrt{\frac{b(1+(a-b)^2+2(a+b))}{(1+a+b)((a-b)^2+a+b)}},
$$
$$
\cos\tilde\rho=\frac{2ht}{\sqrt{(1-t^2)^2+4h^2t^2}}=\sqrt{\frac{a}{(1+a+b)((a-b)^2+a+b)}}(1+a-b).
$$
Finally, replacing in \eqref{eq:v1} and \eqref{eq:v2}, one obtains the expressions \eqref{eq:v} of $v_1$, $v_2$, $\tilde v_1$, and $\tilde v_2$.

When $\rho=0$, working in the same way as in the previous case, one obtains that the lattice $\Lambda_{\psi_{h,0}}$ is given by
\begin{align*}
\Lambda_{\psi_{h,0}}=&\left\{mv_2+kv_1:m,k \in\mathbb{Z}\quad\mbox{s.t.}\quad m\sqrt{\frac{\lambda_2}{\lambda_1}}\in\mathbb{Z}\right\}\\=&
\left\{n\tilde v_2+kv_1:n,k \in\mathbb{Z}\quad\mbox{s.t.}\quad n\sqrt{\frac{\lambda_1}{\lambda_2}}\in\mathbb{Z}\right\},
\end{align*}
where $v_1=(-2\pi/\sqrt{\lambda_2},0)$, $v_2=(0,2\pi/\sqrt{\lambda_1})$, and $\tilde v_2=(0,2\pi/\sqrt{\lambda_2})$. It is then easy to see that $\rank\Lambda_{\psi_{h,0}}=2$ if and only if $\sqrt{\lambda_2/\lambda_1}\in\mathbb{Q}^{\ast}$, that is equivalent to the fact that the mean curvature $h$ is given by $h=(1-b)/(1+b)$, where $b=r^2/t^2$, $r,t\in\mathbb{N}^{\ast}$, with $r<t$ and $(r,t)=1$. Replacing in the expression of $\Lambda_{\psi_{h,0}}$ we obtain its final form and conclude the proof.
\end{proof}

From Proposition \ref{prop12} one obtains the main result in this section.

\begin{theorem}[\cite{LO}]\label{thm7}
Let $h\in (0,1)$. Then there exists a CMC proper-biharmonic immersion from a torus $T^2$ into $\mathbb{S}^5$, $\phi : T^2\to \mathbb{S}^5$ with mean curvature $h$ if and only if either
\begin{itemize}
\item[(i)] 
$$h = \frac{1-b}{1+b},$$
where $b=r^2/t^2$, $r,t\in\mathbb{N}^{\ast}$, with $r<t$; or
\item[(ii)] 
$$ h = \frac{1- (a-b)^2}{1 + (a-b)^2 + 2(a+b)} ,$$
where $a=p^2/q^2$ and $b=r^2/t^2$, with $p,q,r,t \in \mathbb{N}^{\ast}$, such that $0\leq b-a<1$.
\end{itemize}
\end{theorem}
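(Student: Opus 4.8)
The plan is to read off Theorem~\ref{thm7} as a direct consequence of the structure theorem (Theorem~\ref{thm6}) and the torus-quotient classification (Proposition~\ref{prop12}), the only genuine work being to match the two sets of conditions and to pass correctly between an abstractly immersed torus and the model surfaces $\phi_{h,\rho}$. The central reduction I would isolate is the equivalence: a CMC proper-biharmonic immersion of a torus into $\mathbb{S}^5$ with mean curvature $h$ exists if and only if, for some admissible $\rho$, the model $\phi_{h,\rho}$ of Theorem~\ref{thm6} quotients to a torus, i.e.\ $\rank\Lambda_{\psi_{h,\rho}}=2$.

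For the \emph{only if} direction, I would start from an immersion $\phi\colon T^2\to\mathbb{S}^5$ with $T^2=\mathbb{R}^2/\Lambda$ and mean curvature $h\in(0,1)$, and lift it through the covering projection $\pi\colon\mathbb{R}^2\to T^2$ to $\tilde\phi=\phi\circ\pi\colon\mathbb{R}^2\to\mathbb{S}^5$. This lift is again CMC proper-biharmonic with the same $h$, so the converse part of Theorem~\ref{thm6} identifies it, up to isometries of $\mathbb{R}^2$ and $\mathbb{S}^5$, with one of the surfaces $\phi_{h,\rho}$, $\rho\in[0,(1/2)\arccos((h-1)/(1+h))]$. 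Since $\tilde\phi$ factors through $T^2$ we have $\Lambda\subset\Lambda_{\psi_{h,\rho}}$, and because $\rank\Lambda=2$ (as $T^2$ is a torus) the discussion preceding Proposition~\ref{prop12} gives $\rank\Lambda_{\psi_{h,\rho}}=2$. Proposition~\ref{prop12} then forces $h$ into its case (a) or case (b), which are precisely the alternatives (i) and (ii).

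For the \emph{if} direction, I would simply reverse this: given $h$ of the form (i), take $\rho=0$ and the corresponding $b$, reducing the fraction so that $r<t$ and $(r,t)=1$, while for $h$ of the form (ii) I would choose data $a=p^2/q^2$, $b=r^2/t^2$ with $0\le b-a<1$ and the associated $\rho\in(0,(1/2)\arccos((h-1)/(1+h))]$ supplied by Proposition~\ref{prop12} (which, via $s=(1+a-b)/2$, automatically lands in the admissible interval). In either case Proposition~\ref{prop12} guarantees $\rank\Lambda_{\psi_{h,\rho}}=2$, so $\phi_{h,\rho}$ descends to the torus $\mathbb{R}^2/\Lambda_{\psi_{h,\rho}}$, producing a CMC proper-biharmonic immersion with mean curvature $h$.

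The main obstacle is not analytic but a matter of bookkeeping: one must verify that the apparent mismatch between the hypotheses of Proposition~\ref{prop12} and the cleaner statement of Theorem~\ref{thm7} is immaterial. Concretely, the coprimality requirements $(r,t)=1$ and $(p,q)=1$ can be dropped because $h$ depends only on $a$ and $b$, and the rationals $p^2/q^2$, $r^2/t^2$ range over the same set of values whether or not the representing fractions are reduced; likewise the ordering and sign constraints ($r<t$ and $0\le b-a<1$) must be confirmed to carve out exactly the listed $h$-values, using that every admissible $h$ in case (ii) arises from some $\rho$ in the stated range. Care is also needed to ensure that the reduction via Theorem~\ref{thm6}, being only up to isometries of $\mathbb{R}^2$ and $\mathbb{S}^5$, preserves both the mean curvature and the quotienting relation $\Lambda\subset\Lambda_{\psi_{h,\rho}}$, so that no torus immersion can escape the classification.
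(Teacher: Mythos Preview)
Your proposal is correct and follows essentially the same approach as the paper: the paper simply states that Theorem~\ref{thm7} is obtained from Proposition~\ref{prop12}, while you spell out the two-step reduction through Theorem~\ref{thm6} and the rank-$2$ lattice condition, together with the minor bookkeeping about coprimality and the admissible range of $\rho$.
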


\section{Biharmonic immersions of tori in spheres}

In this section we consider two types of tori and, using Theorem \ref{thmM}, show how they can be immersed as CMC proper-biharmonic surfaces in odd-dimensional spheres. While our first case, rectangular tori, only works in $\mathbb{S}^5$ and $\mathbb{S}^7$, the second one, square tori, provides examples of CMC proper-biharmonic immersions for any (odd) dimension of the ambient space.

Our next result gives all possible CMC proper-biharmonic immersions of a class of rectangular tori in $\mathbb{S}^n$.

\begin{theorem}\label{thmT1} 
Let $\Lambda=\{(2\pi\tilde k,2\pi\tilde l\theta): \tilde k,\tilde l\in\mathbb{Z}\}$ be a rectangular lattice and consider the torus $T^2=\mathbb{R}^2/\Lambda$, where $\theta\in\mathbb{R}^{\ast}_{+}$. Then $T^2$ admits a proper-biharmonic immersion in $\mathbb{S}^n$ with constant mean curvature $h\in(0,1)$ if and only if 
$$
\theta^2=(q_1^2+q_2^2)/2\quad\mbox{and}\quad n\in\{5,7\},
$$
where $q_1,q_2\in\mathbb{N}$ and $q_1<q_2$. In this case 
$$
h=\frac{q_2^2-q_1^2}{2(q_1^2+q_2^2)},
$$ 
with $q_1\geq 0$ when $n=5$ and $q_1>0$ when $n=7$. Moreover, the CMC proper-biharmonic immersion from $T^2$ to $\mathbb{S}^n$ corresponds to the map $\psi:\mathbb{R}^2\to\mathbb{R}^{n+1}$ given by \eqref{psi} and determined by one of the following sets of data$:$
\begin{itemize}

\item when $n=5$
$$
R_1=1,\quad R'_1=\frac{1}{2}\left(1-\sqrt{\frac{1-2h}{1+2h}}\right),\quad R'_2=\frac{1}{2}\left(1+\sqrt{\frac{1-2h}{1+2h}}\right)
$$ 
$$
\mu_1=\sqrt{\frac{1-2h}{2(1-h)}}+\frac{\i}{\sqrt{2(1-h)}},\quad \eta_1=\sqrt{\frac{1+2h}{2(1+h)}}+\frac{\i}{\sqrt{2(1+h)}},\quad \eta_2=\bar\eta_1,
$$
where $h\in(0,1/2]$;

\item when $n=7$
$$
R_{1,2}=\frac{1}{2}\left(1\pm\frac{\omega}{\sqrt{1-2h}}\right),\quad R'_{1,2}=\frac{1}{2}\left(1\mp\frac{\omega}{\sqrt{1+2h}}\right),
$$
$$
\mu_1=\sqrt{\frac{1-2h}{2(1-h)}}+\frac{\i}{\sqrt{2(1-h)}}, \mu_2=\bar\mu_1,\quad \eta_1=\sqrt{\frac{1+2h}{2(1+h)}}+\frac{\i}{\sqrt{2(1+h)}}, \eta_2=\bar\eta_1,
$$
where $\omega\in(-\sqrt{1-2h},\sqrt{1-2h})$ and $h\in(0,1/2)$.
\end{itemize}
\end{theorem}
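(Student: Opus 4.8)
The plan is to use Theorem~\ref{thmM} to reduce the problem to a Diophantine analysis of the data $(m,m',R_k,R'_j,\mu_k,\eta_j)$, subject to the requirement that the rectangular lattice $\Lambda$ be contained in $\Lambda_{\psi}$. Writing $\mu_k=e^{\i\alpha_k}$ and $\eta_j=e^{\i\beta_j}$ and using the description of $\Lambda_{\psi}$ recalled before Proposition~\ref{prop12}, membership of the two generators $(2\pi,0)$ and $(0,2\pi\theta)$ of $\Lambda$ in $\Lambda_{\psi}$ is equivalent to
$$\sqrt{\lambda_1}\sin\alpha_k\in\mathbb{Z},\quad \theta\sqrt{\lambda_1}\cos\alpha_k\in\mathbb{Z},\quad \sqrt{\lambda_2}\sin\beta_j\in\mathbb{Z},\quad \theta\sqrt{\lambda_2}\cos\beta_j\in\mathbb{Z}$$
for all $k$ and $j$. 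Since $\lambda_1=2(1-h)<2$ and $\lambda_2=2(1+h)<4$, the integers $\sqrt{\lambda_1}\sin\alpha_k$ and $\sqrt{\lambda_2}\sin\beta_j$ must lie in $\{-1,0,1\}$. I would then sort the admissible $\mu_k$ into three $\pm$-pairs, namely $\{\pm1\}$ (from $\sin\alpha_k=0$) and the two pairs $\{\pm\mu_0\}$, $\{\pm\bar\mu_0\}$ with $\mu_0=\cos\alpha+\i\sin\alpha$, $\sin\alpha=1/\sqrt{\lambda_1}$ (from $\sqrt{\lambda_1}\sin\alpha_k=\pm1$), and similarly for $\eta_j$. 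In the second case the compatibility $\theta\sqrt{\lambda_1}\cos\alpha\in\mathbb{Z}$ together with $\cos^2\alpha=(\lambda_1-1)/\lambda_1$ forces $q_1:=\theta\sqrt{1-2h}\in\mathbb{N}$, and likewise $q_2:=\theta\sqrt{1+2h}\in\mathbb{N}$; adding and subtracting the relations $q_1^2=\theta^2(1-2h)$ and $q_2^2=\theta^2(1+2h)$ yields exactly $\theta^2=(q_1^2+q_2^2)/2$ and $h=(q_2^2-q_1^2)/(2(q_1^2+q_2^2))$.

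The decisive step is to eliminate the real pairs $\{\pm1\}$ using Condition~(f). Computing $\mu_k^2$ and $\eta_j^2$ for each pair type and taking the \emph{real} part of Condition~(f), together with $\sum_k R_k=\sum_j R'_j=1$, collapses everything to the single identity $R_A+R'_{A'}=0$, where $R_A$ and $R'_{A'}$ denote the weights attached to the pairs $\{\pm1\}$ among the $\mu$'s and the $\eta$'s; as all weights are nonnegative, these pairs carry weight zero and are therefore absent. Hence $m\leq2$ and $m'\leq2$, so $n=2m+2m'-1\in\{3,5,7\}$, and $n=3$ is excluded by Theorem~\ref{thmM}(i). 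Writing $R_B,R_C$ and $R'_{B'},R'_{C'}$ for the weights of the non-real pairs, the \emph{imaginary} part of Condition~(f) reads $\sqrt{1-2h}\,(R_B-R_C)+\sqrt{1+2h}\,(R'_{B'}-R'_{C'})=0$. Combining this with positivity of the weights rules out $(m,m')=(2,1)$, since it would force $|R_B-R_C|=\sqrt{(1+2h)/(1-2h)}>1$ against $|R_B-R_C|\leq R_B+R_C=1$, leaving only $(m,m')=(1,2)$ for $n=5$ and $(2,2)$ for $n=7$. Solving the imaginary-part relation with the normalizations then produces precisely the weights stated in the theorem, the positivity constraints giving $h\in(0,1/2]$ (resp.\ $h\in(0,1/2)$), and the non-degeneracy of the two $\mu$-pairs forcing $q_1>0$ exactly when $n=7$.

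For the converse I would take the data displayed in the statement, for each of $n=5$ and $n=7$, and verify directly that Conditions~(a)--(h) of Theorem~\ref{thmM} hold: unit modulus of $\mu_1,\eta_1$, the normalizations $\sum_k R_k=\sum_j R'_j=1$, positivity of the weights on the stated ranges of $h$ (resp.\ $\omega$), distinctness of the $2m+2m'$ numbers $\{\pm\mu_k\},\{\pm\eta_j\}$, and---the only computation needing care---Condition~(f), where the contributions $(1-h)\mu_1^2R_1$ and $(1+h)(\eta_1^2R'_1+\eta_2^2R'_2)$ cancel. A short check that the generators of $\Lambda$ satisfy the congruences above shows $\Lambda\subseteq\Lambda_{\psi}$, so $\psi$ descends to an immersion of $T^2$; fullness follows since every weight is strictly positive, so all basis vectors $E_i$ occur, and properness from $h\in(0,1)$.

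The main obstacle I anticipate is organizing the finite but branching case analysis cleanly, and in particular handling the degenerate boundaries. When $h=1/2$ the two pairs $\{\pm\mu_0\}$, $\{\pm\bar\mu_0\}$ collapse to $\{\pm\i\}$, and this is exactly what separates $n=5$ (where $q_1=0$, i.e.\ $h=1/2$, is admissible) from $n=7$ (where the two $\mu$-pairs must remain distinct, forcing $q_1>0$ and $h<1/2$). Keeping track of when pairs coincide, and matching the resulting weights and lattice vectors to the explicit expressions in the statement, is where the bookkeeping is heaviest; the conceptual content, by contrast, lies entirely in the real- and imaginary-part decomposition of Condition~(f).
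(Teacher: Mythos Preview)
Your proposal is correct and follows essentially the same route as the paper: translate the lattice condition into integrality constraints on the components of $\mu_k,\eta_j$, use the bounds $\lambda_1<2$, $\lambda_2<4$ to force $M_k,M'_j\in\{-1,0,1\}$, eliminate the case $M=0$ via the real part of Condition~(f), and then read off $\theta,h$ and the weights from the remaining imaginary-part equation. Your organization is in fact a bit tighter than the paper's: you obtain $R_A+R'_{A'}=0$ in one stroke where the paper runs a case analysis, and you bypass entirely the paper's separate number-theoretic discussion of the representations $\lambda_i=p_i^2+q_i^2/\theta^2$ (which is redundant once $M_k^2=(M'_j)^2=1$ is known, since setting $q_1=|L_k|$, $q_2=|L'_j|$ already gives $\theta^2=(q_1^2+q_2^2)/2$ directly).
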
 

\begin{proof} Let us consider $\theta\in\mathbb{R}^{\ast}_{+}$, the lattice $\Lambda=\{(2\pi\tilde k,2\pi\tilde l\theta): \tilde k,\tilde l\in\mathbb{Z}\}$, and the torus $T^2=\mathbb{R}^2/\Lambda$. Then the spectrum of the Laplacian on $T^2$ is (see \cite{BDKV,BGM}) 
$$
\left\{\lambda=p^2+\frac{q^2}{\theta^2}:p,q\in\mathbb{N}\right\}.
$$

In order for the immersion $\phi:\mathbb{R}^2\to\mathbb{S}^n$ in Theorem \ref{thmM} to quotient to the torus $T^2$ the map $\psi=i\circ\phi:\mathbb{R}^2\to\mathbb{R}^{n+1}$ has to satisfy $\psi(x,y)=\psi(x+2\pi\tilde k,y+2\pi\tilde l\theta)$ for any pairs $(\tilde k,\tilde l)\in\mathbb{Z}^2$ and $(x,y)\in\mathbb{R}^2$. In particular, we have $\psi(0,0)=\psi(2\pi\tilde k,0)=\psi(0,2\pi\tilde l\theta)$, that, using Expression \eqref{psi} of $\psi$, leads to
$$
\cos(2\sqrt{\lambda_1}\pi\tilde kb_k)=\cos(2\sqrt{\lambda_1}\pi\tilde l\theta a_k)=1,\quad \forall\tilde k,\tilde l\in\mathbb{Z}
$$
and
$$
\cos(2\sqrt{\lambda_2}\pi\tilde kd_j)=\cos(2\sqrt{\lambda_2}\pi\tilde l\theta c_j)=1,\quad \forall\tilde k,\tilde l\in\mathbb{Z},
$$
where $\lambda_1$ and $\lambda_2$ are the eigenvalues of $T^2$, $\mu_k=a_k+\i b_k$, and $\eta_j=c_j+\i d_j$, $1\leq k\leq m$, $1\leq j\leq m'$. These conditions are equivalent to
$$
\begin{cases}
\sqrt{\lambda_1}\theta a_k=L_k\in\mathbb{Z}\\\sqrt{\lambda_1} b_k=M_k\in\mathbb{Z}
\end{cases},\quad k\in\{1,\ldots,m\}
$$
and
$$
\begin{cases}
\sqrt{\lambda_2}\theta c_j=L'_j\in\mathbb{Z}\\\sqrt{\lambda_2} d_j=M'_j\in\mathbb{Z}
\end{cases},\quad j\in\{1,\ldots,m'\}.
$$
Therefore, we have
$$
\mu_k=\frac{L_k}{\theta\sqrt{\lambda_1}}+\i\frac{M_k}{\sqrt{\lambda_1}}\quad\textnormal{and}\quad
\eta_j=\frac{L'_j}{\theta\sqrt{\lambda_2}}+\i\frac{M'_j}{\sqrt{\lambda_2}}.
$$

It can be easily verified that, with these data, the map $\psi:\mathbb{R}^2\to\mathbb{R}^n$ satisfies $\psi(x,y)=\psi(x+2\pi\tilde k,y+2\pi\tilde l\theta)$ for any $(\tilde k,\tilde l)\in\mathbb{Z}^2$ and $(x,y)\in\mathbb{R}^2$ and therefore it quotients to the torus $T^2$.

Now, since $\lambda_1=2(1-h)$, $\lambda_2=2(1+h)$ and $|\mu_k|=|\eta_j|=1$, one obtains
\begin{equation}\label{eq:2}
\begin{cases}
L_k^2+M_k^2\theta^2=2\theta^2(1-h)\\
(L'_j)^2+(M'_j)^2\theta^2=2\theta^2(1+h)
\end{cases},\quad 1\leq k\leq m, \quad 1\leq j\leq m'.
\end{equation}

Next, from Condition (f) in Theorem \ref{thmM}, we get 
\begin{equation}\label{eq:1}
\begin{cases}
\displaystyle\sum_{k=1}^m R_k\left(L_k^2-M_k^2\theta^2\right)+\sum_{j=1}^{m'} R'_j\left((L'_j)^2-(M'_j)^2\theta^2\right)=0\\
\displaystyle\sum_{k=1}^m R_kL_kM_k+\sum_{j=1}^{m'} R'_jL'_jM'_j=0.
\end{cases}
\end{equation}

It is easy to see, from Equations \eqref{eq:2}, that $M_k^2<2$ and $(M_j')^2<4$, for $1\leq k\leq m$, $1\leq j\leq m'$, and we shall study all possible cases:
\begin{itemize}

\item $M_k^2=(M'_j)^2=0$, for all indices $k$ and $j$. From equations \eqref{eq:2} and the second equation of \eqref{eq:1} it follows $\theta=0$, which is a contradiction.

\item $M_k^2=1$ and $(M'_j)^2=0$ or $M_k^2=0$ and $(M'_j)^2=1$, for all indices $k$ and $j$. From Equations \eqref{eq:2} and the first equation of \eqref{eq:1} one obtains again that $\theta$ must vanish.

\item Assume that there exist $s,s'\in\mathbb{N}^{\ast}$, with $s\leq m$, $s'\leq m'$, and $s+s'<m+m'$, such that
$$
M_1^2=\cdots=M_s^2=0,\quad M_{s+1}^2=\cdots=M_m^2=1
$$
and
$$
(M'_1)^2=\cdots=(M'_{s'})^2=0,\quad (M'_{s'+1})^2=\cdots=(M'_{m'})^2=1.
$$
Then, again using Equations \eqref{eq:2} and since $\theta\neq 0$, the first equation of \eqref{eq:1} becomes 
$$
\sum_{k=1}^{s}R_k-h\sum_{k=1}^m R_k+\sum_{j=1}^{s'}R'_j+h\sum_{j=1}^{m'}R'_j=0,
$$
that is $\sum_{k=1}^{s}R_k+\sum_{j=1}^{s'}R'_j=0$, which is a contradiction.

\item $M_k^2=(M'_j)^2=1$, for all indices $k$ and $j$. In this case Equations \eqref{eq:1} are satisfied.
\end{itemize}  
We have just shown that $M_k,M'_j\in\{-1,1\}$, $L^2_k=\theta^2(1-2h)$, and $(L'_j)^2=\theta^2(1+2h)$, $1\leq k\leq m$, $1\leq j\leq m'$. Moreover, the mean curvature $h$ can only take values in the interval $(0,1/2]$. Also, since we must have $m$ distinct $\mu_k$'s and $m'$ distinct $\eta_j$'s, it is easy to see that $(m,m')\in\{(1,2),(2,1),(2,2)\}$, which means that $n=2(m+m')-1\in\{5,7\}$. 

Next, we will find admissible forms for $\theta$ and $h$. First, we know that the eigenvalues of $T^2$ take the form
$$
\lambda_1=2(1-h)=p_1^2+\frac{q_1^2}{\theta^2}\quad\textnormal{and}\quad\lambda_2=2(1+h)=p_2^2+\frac{q_2^2}{\theta^2},
$$
where $p_1,p_2,q_1,q_2\in\mathbb{N}$. Since $\lambda_1+\lambda_2=4$, it follows that $q_1^2+q_2^2>0$, $4-p_1^2-p_2^2>0$, and 
$$
\theta^2=\frac{q_1^2+q_2^2}{4-p_1^2-p_2^2}.
$$
Also, since $\lambda_2-\lambda_1=2h$, we have
$$
h=\frac{2(q_2^2-q_1^2)+p_2^2q_1^2-p_1^2q_2^2}{2(q_1^2+q_2^2)}.
$$

The condition $4-p_1^2-p_2^2>0$ leaves only four possible cases for the pair $(p_1^2,p_2^2)$ that will be analyzed in the following.
\begin{itemize}
\item If $p_1^2=p_2^2=0$ then $0<q_1^2<q_2^2$ and
$$
\theta^2=\frac{q_1^2+q_2^2}{4},\quad h=\frac{q_2^2-q_1^2}{q_1^2+q_2^2}.
$$
Therefore, we have
$$
L_k^2=\frac{3q_1^2-q_2^2}{4},\quad (L'_j)^2=\frac{3q_2^2-q_1^2}{4}.
$$
Now, since $q_1\neq 0$, we can write $q_1=a\alpha$, with $a\in\mathbb{N}^{\ast}$, such that $L_k=a\alpha_k$, with the greatest common divisor $(\alpha,\alpha_k)$ equal to $1$. It follows that $q_2^2=a^2(3\alpha^2-4\alpha_k^2)$ and, therefore, that $3\alpha^2-4\alpha_k^2$ must be a perfect square. But, since a perfect square can only be of the form $9t^2$ or $3t+1$, with $t\in\mathbb{N}$, and $(\alpha,\alpha_k)=1$, by a straightforward analysis of all possible cases, one easily shows that this is not possible.

\item The cases $(p_1^2,p_2^2)=(1,0)$ or $(0,1)$ can be dismissed in the same way as above.

\item When $p_1^2=p_2^2=1$, one obtains $0\leq q_1^2<q_2^2$ and
$$
\theta^2=\frac{q_1^2+q_2^2}{2},\quad h=\frac{q_2^2-q_1^2}{2(q_1^2+q_2^2)}.
$$
Then, we have $L_k^2=q_1^2$, and $(L'_j)^2=q_2^2$.
\end{itemize}

When $n=5$ we have $(m,m')\in\{(1,2),(2,1)\}$. If $(m,m')=(1,2)$ then one obtains 
$$
R_1=1,\quad \mu_1=\sqrt{\frac{1-2h}{2(1-h)}}+\frac{\i}{\sqrt{2(1-h)}},\quad \eta_1=\sqrt{\frac{1+2h}{2(1+h)}}+\frac{\i}{\sqrt{2(1+h)}},\quad \eta_2=\bar\eta_1,
$$
and, from the second equation of \eqref{eq:1}, 
$$
R_1'-R_2'=-\sqrt{\frac{1-2h}{1+2h}},
$$
which, together with $R_1'+R_2'=1$, leads to 
$$
R_{1,2}'=\frac{1}{2}\left(1\mp\sqrt{\frac{1-2h}{1+2h}}\right).
$$
By the symmetry of solutions the other possible cases determine no new CMC proper-biharmonic immersions in $\mathbb{S}^5$.

If $(m,m')=(2,1)$ then we have
$$
R'_1=1,\quad \mu_1=\sqrt{\frac{1-2h}{2(1-h)}}+\frac{\i}{\sqrt{2(1-h)}},\quad \mu_2=\bar\mu_1,\quad \eta_1=\sqrt{\frac{1+2h}{2(1+h)}}+\frac{\i}{\sqrt{2(1+h)}}
$$
and obtain 
$$
R_1=\frac{1}{2}\left(1-\sqrt{\frac{1+2h}{1-2h}}\right)<0,
$$
that is a contradiction, which was to be expected as we have already seen in the previous section.

When $n=7$ we take 
$$
\mu_1=\sqrt{\frac{1-2h}{2(1-h)}}+\frac{\i}{\sqrt{2(1-h)}}, \mu_2=\bar\mu_1,\quad \eta_1=\sqrt{\frac{1+2h}{2(1+h)}}+\frac{\i}{\sqrt{2(1+h)}},\eta_2=\bar\eta_1,
$$
and, since $R_1+R_2=1$ and $R_1'+R_2'=1$, the second equation of \eqref{eq:1} becomes
$$
(R_1-R_2)\sqrt{1-2h}+(R'_1-R'_2)\sqrt{1+2h}=0,
$$
with solutions
$$
R_{1,2}=\frac{1}{2}\left(1\pm\frac{\omega}{\sqrt{1-2h}}\right),\quad R'_{1,2}=\frac{1}{2}\left(1\mp\frac{\omega}{\sqrt{1+2h}}\right),
$$
where $\omega\in(-\sqrt{1-2h},\sqrt{1-2h})$ and $h\in(0,1/2)$.
\end{proof}

\begin{remark} A torus $T^2$ of the type considered in Theorem \ref{thmT1} admits a CMC proper-biharmonic immersion with $h=1/2$ only in $\mathbb{S}^5$. In this case, $\theta=q_2^2/2$ and the immersion is given by
$$
R_1=1,\quad R'_1=R'_2=\frac{1}{2},\quad \mu_1=\i,\quad \eta_1=\frac{\sqrt{2}+\i}{\sqrt{3}},\quad \eta_2=\bar\eta_1.
$$
\end{remark}

\begin{remark}\label{r3.3} When $n=5$ the expression of $h$ in Theorem \ref{thmT1} can be obtained from the general expression of $h$ in Proposition \ref{prop12}, case (b), by taking $a=((q_1-q_2)/(2q_2))^2$ and $b=((q_1+q_2)/(2q_2))^2$, with $0\leq q_1<q_2$. It is easy to see that $0\leq b-a<1$. For these values of $a$ and $b$ we have that 
$$
v_1=\left(\frac{\pi\sqrt{2}\sqrt{3q_1^2+q_2^2}}{q_2-q_1},0\right)\quad\mbox{and}\quad v_2=\left(-\frac{\pi\sqrt{2}(q_1+q_2)^2}{(q_2-q_1)\sqrt{3q_1^2+q_2^2}},\frac{2\pi\sqrt{q_1^2+q_2^2}}{\sqrt{3q_1^2+q_2^2}}\right),
$$
and it is not hard to see that $v\in\Lambda_{\psi_{h,\rho}}$ if and only if $v=mv_2+nv_1$, with $m,n\in\mathbb{Z}$ and 
$$
\frac{2mq_2-n(q_1+q_2)}{q_2-q_1}\in\mathbb{Z}.
$$
On the other hand, the lattice $\Lambda$ in Theorem \ref{thmT1} is generated by
$$
f_1=(2\pi,0)\quad\mbox{and}\quad f_2=(0,2\pi\theta)=\left(0,\pi\sqrt{2}\sqrt{q_1^2+q_2^2}\right).
$$
Consider a rotation in $\mathbb{R}^2$ with matrix
$$
\left(\begin{array}{cc}
\frac{q_1\sqrt{2}}{\sqrt{3q_1^2+q_2^2}}&\frac{\sqrt{q_1^2+q_2^2}}{\sqrt{3q_1^2+q_2^2}}\\ \\-\frac{\sqrt{q_1^2+q_2^2}}{\sqrt{3q_1^2+q_2^2}}&\frac{q_1\sqrt{2}}{\sqrt{3q_1^2+q_2^2}}\end{array}\right)
$$
and still denote the resulting lattice by $\Lambda$. The vectors $f_1$ and $f_2$ become
$$
\tilde f_1=\left(\frac{2\pi\sqrt{2}q_1}{\sqrt{3q_1^2+q_2^2}},-\frac{2\pi\sqrt{q_1^2+q_2^2}}{\sqrt{3q_1^2+q_2^2}}\right)\quad\mbox{and}\quad\tilde f_2=\left(\frac{\pi\sqrt{2}(q_1^2+q_2^2)}{\sqrt{3q_1^2+q_2^2}},\frac{2\pi q_1\sqrt{q_1^2+q_2^2}}{\sqrt{3q_1^2+q_2^2}}\right),
$$
respectively, and it can easily be verified that
$$
\tilde f_1=-v_1-v_2\in\Lambda_{\psi_{h,\rho}}\quad\mbox{and}\quad \tilde f_2=q_2v_1+q_1v_2\in\Lambda_{\psi_{h,\rho}},
$$
which means that $\Lambda\subseteq\Lambda_{\psi_{h,\rho}}$. It is also easy to see that if $q_2=q_1+1$, then $\Lambda=\Lambda_{\psi_{h,\rho}}$and therefore we do not merely have an immersion but an embedding.
\end{remark}

\begin{remark} For $0<q_1^2<q_2^2$ the same torus can be immersed in $\mathbb{S}^5$ and in $\mathbb{S}^7$ as CMC proper-biharmonic surfaces, with the same constant mean curvature.
\end{remark}

\begin{remark} Let $r\in\mathbb{N}^{\ast}$ and $\tilde q_1=rq_1$, $\tilde q_2=rq_2$ and consider $\tilde\theta^2=(\tilde q_1^2+\tilde q_2^2)/2$. Then we have a family of non-isometric rectangular tori immersed in $\mathbb{S}^5$ (or in $\mathbb{S}^7$) as CMC proper-biharmonic surfaces with the same mean curvature. However, if we consider two pairs $(q_1,q_2)$ and $(\tilde q_1,\tilde q_2)$ such that $h=\tilde h$, then the pairs of rational numbers $(a,b)$ and $(\tilde a,\tilde b)$ given by Remark \ref{r3.3} coincide. Therefore the corresponding lattices $\Lambda_{\psi_{h,\rho}}$ coincide.
\end{remark}

\begin{remark} The same rectangular torus can be immersed in $\mathbb{S}^5$ (or in $\mathbb{S}^7$) as a CMC proper-biharmonic surface in different ways with different mean curvatures.
\end{remark}

\begin{remark} From the proof of Theorem \ref{thmT1} it is easy to see that a rectangular torus with both sides of length less than $1/\sqrt{2}$ cannot be immersed in a sphere $\mathbb{S}^n$ as a CMC proper-biharmonic surface.
\end{remark}

The following result provides necessary and sufficient conditions for square tori to admit CMC proper-biharmonic immersions in odd-dimensional spheres. In order to state the theorem we first denote by $r_2(p)$ the number of representations of $p\in\mathbb{N}$ as a sum of two squares of integers.

\begin{theorem}\label{thmT2}
Let $\Lambda=\{(2\pi\tilde ka,2\pi\tilde la): \tilde k,\tilde l\in\mathbb{Z}\}$ be a square lattice and consider the torus $T^2=\mathbb{R}^2/\Lambda$, where $a\in\mathbb{R}^{\ast}_{+}$. Then we have
\begin{itemize}
\item[(a)] $T^2$ admits a proper-biharmonic immersion in $\mathbb{S}^n$, $n\equiv 3 \pmod 4$, with constant mean curvature $h\in(0,1)$ if and only if 
$$
4a^2=p_1^2+q_1^2+p_2^2+q_2^2,\quad h=\frac{p_2^2+q_2^2-p_1^2-q_1^2}{p_1^2+q_1^2+p_2^2+q_2^2},
$$
and
$$
7\leq n\leq r_2(p_1^2+q_1^2)+r_2(p_2^2+q_2^2)-1,
$$
where $p_1,q_1,p_2,q_2\in\mathbb{N}$ such that $0<p_1^2+q_1^2<p_2^2+q_2^2$. 

\item[(b)] If $4a^2=p^2+q^2$, where $p,q\in\mathbb{N}$ such that $0<p<q$, then $T^2$ admits a CMC proper-biharmonic immersion in $\mathbb{S}^n$ with $h=(q^2-p^2)/(p^2+q^2)$ for any odd $n$, $5\leq n\leq r_2(p^2)+r_2(q^2)-1$.
\end{itemize}
\end{theorem}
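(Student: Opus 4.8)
The plan is to follow the template of the proof of Theorem~\ref{thmT1}, replacing the rectangular lattice by the square one and tracking the extra symmetry this creates. First I would record that the spectrum of the Laplacian on $T^2=\mathbb{R}^2/\Lambda$ with $\Lambda=\{(2\pi\tilde ka,2\pi\tilde la)\}$ is $\{(p^2+q^2)/a^2:p,q\in\mathbb{N}\}$ (see \cite{BDKV,BGM}). Imposing that $\psi=i\circ\phi$ from \eqref{psi} be invariant under the two generators of $\Lambda$ forces, exactly as in Theorem~\ref{thmT1},
$$
\mu_k=\frac{L_k+\i M_k}{\sqrt{N_1}},\qquad \eta_j=\frac{L'_j+\i M'_j}{\sqrt{N_2}},\qquad L_k,M_k,L'_j,M'_j\in\mathbb{Z},
$$
where, using $|\mu_k|=|\eta_j|=1$, one has $N_1=\lambda_1a^2=L_k^2+M_k^2$ and $N_2=\lambda_2a^2=(L'_j)^2+(M'_j)^2$. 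Since $\lambda_1=2(1-h)$ and $\lambda_2=2(1+h)$ must lie in the spectrum, $N_1=p_1^2+q_1^2$ and $N_2=p_2^2+q_2^2$ are positive integers; adding and subtracting gives $4a^2=N_1+N_2$ together with the stated expression for $h$, while $0<N_1<N_2$ is exactly $0<h<1$. This already establishes the necessity of the arithmetic conditions in (a).

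Next I would rewrite Condition~(f) of Theorem~\ref{thmM}. Since $(1-h)\mu_k^2=(L_k^2-M_k^2+2\i L_kM_k)/(2a^2)$ and similarly for $\eta_j$, (f) splits into the two real equations
$$
\sum_k R_k(L_k^2-M_k^2)+\sum_j R'_j((L'_j)^2-(M'_j)^2)=0,\qquad \sum_k R_kL_kM_k+\sum_j R'_jL'_jM'_j=0,
$$
the exact analogues of \eqref{eq:1}. The distinctness in Conditions~(g),(h) means the $2m$ numbers $\pm\mu_k$ are distinct among the $r_2(N_1)$ scaled representations of $N_1$, so $m\le r_2(N_1)/2$ and likewise $m'\le r_2(N_2)/2$; as $n=2(m+m')-1$ this yields the upper bound $n\le r_2(N_1)+r_2(N_2)-1$ and finishes necessity in (a). The structural observation I would emphasize is that the set of values $\{\mu_k^2\}$ is invariant under complex conjugation ($M_k\mapsto -M_k$) and under multiplication by $-1$, the latter coming from the $90^\circ$ rotation $(L_k,M_k)\mapsto(-M_k,L_k)$, i.e. $\mu_k\mapsto\i\mu_k$ (and the same for the $\eta_j$).

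For sufficiency in (a) I would use this symmetry directly. Given $n\equiv 3\pmod 4$ with $7\le n\le r_2(N_1)+r_2(N_2)-1$, write $m+m'=(n+1)/2$, which is even, and pick $m,m'$ both even in their admissible ranges (possible because $r_2(N_i)\equiv 0\pmod4$, so $r_2(N_i)/2$ is even). Grouping the chosen $\mu_k$ into pairs $\{\mu,\i\mu\}$ with equal weights makes each pair contribute $\mu^2+(\i\mu)^2=0$, hence $\sum_kR_k\mu_k^2=0$ and $\sum_jR'_j\eta_j^2=0$, so (f) holds automatically; a short bookkeeping check shows every even value of $m+m'$ in range is attained. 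For (b) the perfect-square decomposition $N_1=p^2$, $N_2=q^2$ additionally supplies the axis-aligned representations $(\pm p,0),(0,\pm p)$ and $(\pm q,0),(0,\pm q)$, for which $\mu^2,\eta^2\in\{1,-1\}$ are \emph{real}; this is precisely what lets one reach the odd values of $m+m'$ (that is, $n\equiv1\pmod4$) inaccessible to the $90^\circ$-pairing. For $n=5$ one takes $m=1$, $m'=2$ with $\mu_1=1$, $\eta_1=1$, $\eta_2=\i$, $R_1=1$, $R'_1=h/(1+h)$, $R'_2=1/(1+h)$ (recovering the $\rho=0$ solution of Theorem~\ref{thm6}), and for larger odd $n$ one keeps this active block, adjoins $90^\circ$-pairs contributing $0$ on either side, and perturbs the two axis-aligned weights slightly to keep (f) satisfied with all weights positive; reversing the roles of the two families covers the remaining parities.

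The step I expect to be the main obstacle is the construction: producing positive weights $R_k,R'_j$ that satisfy both real equations of (f) simultaneously, for \emph{every} admissible $n$, while respecting the distinctness and count constraints. The $90^\circ$-rotation symmetry of $\{\mu_k^2\}$ trivialises this whenever $m+m'$ is even, but the genuinely delicate case—and the reason (a) and (b) take different forms—is the odd one, which appears to require the real values $\mu^2=\pm1$ available only when $N_1$ or $N_2$ is a perfect square.
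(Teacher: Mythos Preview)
Your proposal is correct and follows essentially the same route as the paper. Both arguments reduce the periodicity condition to $\mu_k=(L_k+\i M_k)/(a\sqrt{\lambda_1})$, $\eta_j=(L'_j+\i M'_j)/(a\sqrt{\lambda_2})$ with integer entries, derive the same pair of real equations from Condition~(f), obtain the bound $n\le r_2(N_1)+r_2(N_2)-1$ from the count of representations, and then construct explicit immersions by the $90^\circ$-pairing $\mu\mapsto\i\mu$ (equivalently $(L,M)\mapsto(M,-L)$) with equal weights to force $\sum R_k\mu_k^2=\sum R'_j\eta_j^2=0$ when $m,m'$ are both even; for the $n\equiv 1\pmod 4$ case in~(b) both use the axis-aligned representation $(p,0)$ (giving $\mu_1=1$) together with such pairs. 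The paper differs only in that it writes down closed-form weights---e.g.\ $R_1=\tfrac12$, $R_k=\tfrac{1}{2(m-1)}$ for $k\ge2$ on the $\mu$ side and $R'_{1,2}=\tfrac14\mp\tfrac{1-h}{4(1+h)}$, $R'_j=\tfrac{1}{2(m'-2)}$ for $j\ge3$ on the $\eta$ side---whereas you invoke a perturbation argument; your observation that $r_2(N_i)\equiv 0\pmod 4$ makes the bookkeeping for hitting every admissible $n$ slightly cleaner than the paper's ``remove two at a time'' description, but the content is the same.
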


\begin{proof} The immersion $\phi:\mathbb{R}^2\to\mathbb{S}^n$ in Theorem \ref{thmM} quotients to the square torus $T^2$ if and only if the map $\psi=i\circ\phi:\mathbb{R}^2\to\mathbb{R}^{n+1}$ satisfies $\psi(x,y)=\psi(x+2\pi\tilde ka,y+2\pi\tilde la)$ for any $(\tilde k,\tilde l)\in\mathbb{Z}^2$ and $(x,y)\in\mathbb{R}^2$. Therefore, we have $\psi(0,0)=\psi(2\pi\tilde k,0)=\psi(0,2\pi\tilde l\theta)$, which, also using the expression \eqref{psi} of $\psi$, shows that
$$
\cos(2\sqrt{\lambda_1}\pi\tilde kab_k)=\cos(2\sqrt{\lambda_1}\pi\tilde la a_k)=1,\quad \forall\tilde k,\tilde l\in\mathbb{Z}
$$
and
$$
\cos(2\sqrt{\lambda_2}\pi\tilde kad_j)=\cos(2\sqrt{\lambda_2}\pi\tilde la c_j)=1,\quad \forall\tilde k,\tilde l\in\mathbb{Z},
$$
where $\lambda_1$ and $\lambda_2$ are the eigenvalues of $T^2$, $\mu_k=a_k+\i b_k$, and $\eta_j=c_j+\i d_j$, $1\leq k\leq m$, $1\leq j\leq m'$.

Thus, one obtains
$$
\mu_k=\frac{L_k+\i M_k}{a\sqrt{\lambda_1}}\quad\textnormal{and}\quad
\eta_j=\frac{L'_j+\i M'_j}{a\sqrt{\lambda_2}},
$$
where $L_k,M_k,L'_j,M'_j\in\mathbb{Z}$, $1\leq k\leq m$, $1\leq j\leq m'$. 

It is easy to verify that these conditions also imply the general periodicity property $\psi(x,y)=\psi(x+2\pi\tilde ka,y+2\pi\tilde la)$, $(\tilde k,\tilde l)\in\mathbb{Z}^2$ and $(x,y)\in\mathbb{R}^2$, and, therefore, the immersion $\phi:\mathbb{R}^2\to\mathbb{S}^n$ quotients to $T^2$.

Next, since $\lambda_1=2(1-h)$, $\lambda_2=2(1+h)$, and $|\mu_k|=|\eta_j|=1$, we have
\begin{equation}\label{eq:22}
\begin{cases}
L_k^2+M_k^2=2a^2(1-h)\\
(L'_j)^2+(M'_j)^2=2a^2(1+h)
\end{cases},\quad 1\leq k\leq m, \quad 1\leq j\leq m'.
\end{equation}

Also, condition (f) in Theorem \ref{thmM} gives
\begin{equation}\label{eq:12}
\begin{cases}
\displaystyle\sum_{k=1}^m R_k\left(L_k^2-M_k^2\right)+\sum_{j=1}^{m'} R'_j\left((L'_j)^2-(M'_j)^2\right)=0\\
\displaystyle\sum_{k=1}^m R_kL_kM_k+\sum_{j=1}^{m'} R'_jL'_jM'_j=0.
\end{cases}
\end{equation}
If we denote 
$$
U=\left(\sqrt{R_1}L_1,\ldots,\sqrt{R_m}L_m,\sqrt{R'_1}L'_1,\ldots\sqrt{R'_{m'}}L'_{m'}\right)
$$ 
and 
$$
V=\left(\sqrt{R_1}M_1,\ldots,\sqrt{R_m}M_m,\sqrt{R'_1}M'_1,\ldots\sqrt{R'_{m'}}M'_{m'}\right),
$$
the above system can be written as
$$
\begin{cases}
||U||=||V||\\\langle U,V\rangle=0.
\end{cases}
$$

We will determine in the following the admissible forms for $a$ and $h$.

Since the spectrum of the Laplacian on $T^2$ is (see \cite{BDKV,BGM})
$$
\left\{\lambda=\frac{p^2+q^2}{a^2}:p,q\in\mathbb{N}\right\},
$$
it follows that there exist four nonnegative integers $p_1$, $q_1$, $p_2$, and $q_2$, with $0<p_1^2+q_1^2<p_2^2+q_2^2$, such that $\lambda_1=(p_1^2+q_1^2)/a^2$ and $\lambda_2=(p_2^2+q_2^2)/a^2$. Now, it is not hard to see that condition $\lambda_1+\lambda_2=4$ implies that
$$
4a^2=p_1^2+q_1^2+p_2^2+q_2^2\quad\mbox{and}\quad h=\frac{p_2^2+q_2^2-p_1^2-q_1^2}{p_1^2+q_1^2+p_2^2+q_2^2}.
$$

To determine the maximum possible dimension of the ambient space we first note that, from Equations \eqref{eq:22}, one obtains
\begin{equation}\label{eq:3.5}
\begin{cases}
L_k^2+M_k^2=p_1^2+q_1^2\\
(L'_j)^2+(M'_j)^2=p_2^2+q_2^2
\end{cases},\quad 1\leq k\leq m, \quad 1\leq j\leq m',
\end{equation}
that shows that $0<2m\leq r_2(p_1^2+q_1^2)$ and $0<2m'\leq r_2(p_2^2+q_2^2)$. Then, from \cite[Theorem~4.12]{SS} and Conditions (g) and (h) in Theorem \ref{thmM}, we get that
$$
n\leq r_2(p_1^2+q_1^2)+r_2(p_2^2+q_2^2)-1=4N-1,
$$
where $N=(r_2(p_1^2+q_1^2)+r_2(p_2^2+q_2^2))/4\in\mathbb{N}$, $N\geq 2$.

Let us assume that $n=4N-1$. In order to obtain a particular solution of the system formed from Equations \eqref{eq:12} and \eqref{eq:3.5}, we first note that $m$ and $m'$ are even numbers and take $R_k=1/m$, $1\leq k\leq m$, and $R'_j=1/m'$, $1\leq j\leq m'$. We consider $L_{2\alpha-1}$, $M_{2\alpha-1}$, $L'_{2\beta-1}$, $M'_{2\beta-1}$ nonnegative integers such that $L^2_{2\alpha-1}+M^2_{2\alpha-1}=p_1^2+q_1^2$, $(L'_{2\beta-1})^2+(M'_{2\beta-1})^2=p_2^2+q_2^2$, and
$$
\begin{cases}
L_{2\alpha}=M_{2\alpha-1},\quad M_{2\alpha}=-L_{2\alpha-1}\\L'_{2\beta}=M'_{2\beta-1},\quad M'_{2\beta}=-L'_{2\beta-1}
\end{cases},\quad 1\leq\alpha\leq \frac{m}{2},\quad 1\leq\beta\leq \frac{m'}{2}.
$$
If $\sqrt{p_1^2+q_1^2}\in\mathbb{N}^{\ast}$, then we consider the pair $(L_{2\alpha-1},M_{2\alpha-1})=(0,\sqrt{p_1^2+q_1^2})$ but not $(\sqrt{p_1^2+q_1^2},0)$.

We now have $m=r_2(p_1^2+q_1^2)/2$ distinct unit complex numbers 
$$
\mu^2_k=\left(\frac{L_k+\i M_k}{a\sqrt{2(1-h)}}\right)^2
$$
and $m'=r_2(p_2^2+q_2^2)/2$ distinct unit complex numbers 
$$
\eta^2_j=\left(\frac{L'_j+\i M'_j}{a\sqrt{2(1+h)}}\right)^2.
$$ 
Then the data
$$
\left(R_k=\frac{1}{m},R'_j=\frac{1}{m'},\mu_k=\frac{L_k+\i M_k}{a\sqrt{2(1-h)}},\eta_j=\frac{L'_j+\i M'_j}{a\sqrt{2(1+h)}}\right),
$$
with $1\leq k\leq m$, $1\leq j\leq m'$, $2(m+m')-1=n=4N-1\geq 7$, determine a proper-biharmonic immersion of $T^2$ in $\mathbb{S}^n$ with constant mean curvature $h$. 

Next, it can be easily verified that, for example, the data
$$
\left(R_k=\frac{1}{m-2},R'_j=\frac{1}{m'},\mu_1,\dots,\mu_{m-2},\eta_1,\ldots,\eta_{m'}\right)
$$
also give a CMC proper-biharmonic immersion of $T^2$ in $\mathbb{S}^{4N-5}$, with the same mean curvature $h$. Working this way, we can construct CMC proper-biharmonic immersions of $T^2$ in any sphere $\mathbb{S}^n$, $n\in\{7,\ldots,4N-5,4N-1\}$, all with the same mean curvature $h$.   

In the following, we shall assume that $4a^2=p^2+q^2$, with $p,q\in\mathbb{N}$, $0<p<q$. Let us also consider $m$ odd, such that $m<r_2(p^2)/2$, and $m'$ even, such that $m'=r_2(q^2)$. Then we can take $p_1=p$, $q_1=0$, $p_2=0$, and $q_2=q$ and obtain $h=(q^2-p^2)/(p^2+q^2)$. One can check that another solution of \eqref{eq:12} and \eqref{eq:3.5} is given by
$$
L_1=a\sqrt{2(1-h)}=p,\quad M_1=0,
$$
the positive integers $L_k$, $M_k$, with $2\leq k\leq m$, chosen such that $L_k^2+M_k^2=2a^2(1-h)=p^2$ and 
$$
L_{2\alpha+1}=-M_{2\alpha},\quad M_{2\alpha+1}=L_{2\alpha},\quad 1\leq\alpha\leq \frac{m-1}{2},
$$
and
$$
\begin{cases}
\displaystyle R_1=\frac{1}{2},\quad R_2=\cdots=R_m=\frac{1}{2(m-1)},\quad\mbox{if}\quad m>1\\
\displaystyle R_1=1,\quad\mbox{if}\quad m=1,
\end{cases} 
$$
and also
$$
L'_1=a\sqrt{2(1+h)}=q,\quad M'_1=0,\quad L'_2=M'_1=0,\quad M'_2=-L'_1=-q
$$
and positive integers $L'_j$, $M'_j$, with $3\leq j\leq m'$, chosen such that $(L'_j)^2+(M'_j)^2=2a^2(1+h)=q^2$ and 
$$
L'_{2\beta}=M'_{2\beta-1},\quad M'_{2\beta}=-L'_{2\beta-1},\quad 2\leq\beta\leq \frac{m'}{2},
$$
and
$$
\begin{cases}
\displaystyle R'_{1,2}=\frac{1}{4}\mp\frac{1-h}{4(1+h)},\quad R'_3=\cdots=R'_{m'}=\frac{1}{2(m'-2)},\quad\mbox{if}\quad m'>2\\
\displaystyle R'_{1,2}=\frac{1}{2}\mp\frac{1-h}{2(1+h)},\quad\mbox{if}\quad m'=2.
\end{cases}
$$

For this solution we have $0<2m\leq r_2(p^2)-2$ and $0<2m'=r_2(q^2)$ and, therefore, again using \cite[Theorem~4.12]{SS}, one obtains $5\leq n\leq r_2(p^2)+r_2(q^2)-3=r_2(p_1^2+q_1^2)+r_2(p_2^2+q_2^2)-3=4N-3$. 

Hence, we have a CMC proper-biharmonic immersion of $T^2$ in $\mathbb{S}^n$, $n=4N-3\geq 5$, with mean curvature $h$. In the same way as for the other particular solution, using this immersion, one can construct proper-biharmonic immersions of $T^2$ in any $\mathbb{S}^n$, with $n\in\{5,\ldots,4N-7,4N-3\}$, with the same constant mean curvature.

Therefore, these explicit examples show that we have CMC proper-biharmonic immersions of $T^2$ in $\mathbb{S}^n$ for any odd integer $n$ between or equal to $5$ and $4N-1$.
\end{proof}

\begin{remark} From the proof of Theorem \ref{thmT2}, it is not hard to see that $a\geq\sqrt{3}/2$. Moreover, if $a=\sqrt{3}/2$, one obtains that the corresponding square torus can be immersed only in $\mathbb{S}^7$, in a unique way given by
$$
\left(R_1=R_2=R'_1=R'_2=\frac{1}{2},\mu_1=i,\mu_2=1,\eta_1=\frac{1+\i}{\sqrt{2}},\eta_2=\bar\eta_2\right).
$$
\end{remark}

\begin{remark} While any positive integer can be written as a sum of four squares (not necessarily satisfying the condition in Theorem \ref{thmT2} though), a positive integer can be written as a sum of two squares if and only if each of its prime factors of the form $4p-1$ occurs with an even power in its prime factorization (see, for example, \cite{SS}).
\end{remark}

As positive integers $p$ and $q$ can be chosen such that $r_2(p^2)+r_2(q^2)$ is arbitrarily large, from Theorem \ref{thmT2}, we have the following result.

\begin{theorem} For any sphere $\mathbb{S}^n$, with $n$ odd, there exists a square torus that can be immersed in $\mathbb{S}^n$ as a CMC proper-biharmonic surface.
\end{theorem}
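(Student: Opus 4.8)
The plan is to derive the statement directly from Theorem~\ref{thmT2}(b) by a counting argument, so that essentially all the content is reduced to one elementary number-theoretic estimate. Fix an odd integer $n\geq 5$; by Theorem~\ref{thmM}(i) a CMC proper-biharmonic surface forces the ambient dimension to be odd and at least $5$, so this is the only range in which the assertion can hold, and the theorem should be read accordingly. Theorem~\ref{thmT2}(b) guarantees that whenever $4a^2=p^2+q^2$ with $p,q\in\mathbb{N}$ and $0<p<q$, the square torus $T^2=\mathbb{R}^2/\Lambda$ of side $a$ admits a CMC proper-biharmonic immersion into $\mathbb{S}^{n}$ for \emph{every} odd $n$ with $5\leq n\leq r_2(p^2)+r_2(q^2)-1$. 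Hence it suffices, for the given $n$, to produce a pair $(p,q)$ with $0<p<q$ such that $r_2(p^2)+r_2(q^2)-1\geq n$; one then sets $a=\sqrt{p^2+q^2}/2$ and invokes the theorem, whose conclusion already covers all admissible odd dimensions up to the bound, in particular $n$ itself.

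This reduction leaves only the task of showing that $r_2(p^2)+r_2(q^2)$ can be made arbitrarily large. First I would recall the multiplicative formula for the representation count: if $N=2^{a_0}\prod_i p_i^{a_i}\prod_j \ell_j^{c_j}$ with the $p_i\equiv 1 \pmod 4$ and the $\ell_j\equiv 3 \pmod 4$, then $r_2(N)=4\prod_i(a_i+1)$ when every $c_j$ is even, and $r_2(N)=0$ otherwise. Taking $q=\prod_{i=1}^{k}p_i$ to be a product of $k$ distinct primes $p_i\equiv 1\pmod 4$ (there are infinitely many by Dirichlet's theorem, or by an elementary Euclid-type argument), we get $q^2=\prod_i p_i^2$ and hence $r_2(q^2)=4\cdot 3^{k}$, which tends to infinity with $k$. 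Fixing $p=1<q$ gives $r_2(p^2)=r_2(1)=4$, so that $r_2(p^2)+r_2(q^2)=4+4\cdot 3^{k}$.

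To finish, given the target odd $n\geq 5$ I would choose $k$ large enough that $4+4\cdot 3^{k}-1\geq n$, set $q=\prod_{i=1}^{k}p_i$, $p=1$, and $a=\sqrt{1+q^2}/2$. The square torus of side $a$ then lies in the range of applicability of Theorem~\ref{thmT2}(b) for this $n$, yielding the desired CMC proper-biharmonic immersion into $\mathbb{S}^{n}$. A slightly more economical variant, avoiding any appeal to the infinitude of primes $\equiv 1\pmod 4$, is to take $q=5^{k}$, for which $r_2(q^2)=r_2(5^{2k})=4(2k+1)\to\infty$; either choice works.

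I do not anticipate a genuine obstacle: the geometry has already been carried out in Theorem~\ref{thmT2}, and the only remaining ingredient is the unboundedness of $r_2$ on squares. The sole point needing care is bookkeeping on parity and the lower endpoint, since part (b) only delivers odd dimensions from $5$ onward, which is precisely the regime where such surfaces exist at all; once this is acknowledged, the argument is complete.
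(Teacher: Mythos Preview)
Your proposal is correct and follows exactly the paper's own approach: the paper derives the theorem in one line from Theorem~\ref{thmT2}(b) by observing that $r_2(p^2)+r_2(q^2)$ can be made arbitrarily large, and you supply precisely this argument with the standard multiplicative formula for $r_2$ and an explicit choice of $(p,q)$.
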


\end{document}